\newtheorem*{rep@theorem}{\rep@title}\newcommand{\newreptheorem}[2]{%
\newenvironment{rep#1}[1]{%
\def\rep@title{\bf #2 \ref{##1}}%
\begin{rep@theorem}}%
{\end{rep@theorem}}}
\newtheorem*{rep@proposition}{\rep@title}\newcommand{\newrepproposition}[2]{%
\newenvironment{rep#1}[1]{%
\def\rep@title{\bf #2 \ref{##1}}%
\begin{rep@proposition}}%
{\end{rep@proposition}}}
\newtheorem{theorem}{Theorem}[section]
\newtheorem{proposition}[theorem]{Proposition}
\newtheorem{lemma}[theorem]{Lemma}
\newtheorem{corollary}[theorem]{Corollary}
\theoremstyle{definition}
\newtheorem{remark}[theorem]{Remark}
\newtheorem{definition}[theorem]{Definition}
\newtheorem{example}[theorem]{Example}
\newcommand{\Z}{\mathbb{Z}}
\newcommand{\N}{\mathbb{N}}
\newcommand{\bx}{\mathbf{x}}
\DeclareMathOperator{\val}{val}
\DeclareMathOperator{\cok}{cok}
\DeclareMathOperator{\lcm}{lcm}
\newcommand\commentout[1]{}
\begin{document}
 
  \title{On the critical group of hinge graphs }

\author{Aren Martinian}
\address{Department of Mathematics\\
         University of California, Berkeley}
\email{arenmatinian@berkeley.edu}

\author{Andr\'es R. Vindas-Mel\'endez}
\address{Department of Mathematics, UC Berkeley~\&~Harvey Mudd College\hfill \hfill \phantom{phantom}
\indent \url{https://math.berkeley.edu/~vindas}}
\email{andres.vindas@berkeley.edu ; arvm@hmc.edu}

\begin{abstract}
Let $G$ be a finite, connected, simple graph.
The critical group $K(G)$, also known as the sandpile group, is the torsion subgroup of the cokernel of the graph Laplacian $\operatorname{cok}(L)$. 
We investigate a family of graphs with relatively simple non-cyclic critical group with an end goal of understanding whether multiple divisors, i.e., formal linear combinations of vertices of $G$, generate $K(G)$. 
These graphs, referred to as hinge graphs, can be intuitively understood by taking multiple base shapes and ``gluing" them together by a single shared edge and two corresponding shared vertices. 
In the case where all base shapes are identical, we compute the explicit structure of the critical group.
Additionally, we compute the order of three special divisors. 
We prove the structure of the critical group of hinge graphs when variance in the number of vertices of each base shape is allowed, generalizing many of the aforementioned results.
\end{abstract}

\maketitle

\section{Introduction}
In this paper we study a finite abelian group associated to a finite connected graph $G$, known as the \emph{critical group} of $G$. 
The critical group goes by different names (e.g., the Jacobian group, sandpile group, component group) and is studied in various mathematical areas (e.g., algebraic geometry, statistical physics, combinatorics) \cite{GlassKaplan}.
We focus on the combinatorial definition of the critical group involving chip-firing operations and its connections to graph-theoretic trees. 
In particular, for a finite connected graph, the order of the critical group equals the number of spanning trees of the graph. 
For the interested reader, we recommend the survey paper by Glass and Kaplan \cite{GlassKaplan} as an introduction to the study of critical groups and chip-firing, as well as the books by Klivans \cite{Klivans} and Cory and Perkinson \cite{CoryPerkinson} for comprehensive considerations of chip-firing. 

There are many results on the group structure of the critical group and the relationship with the structure of an associated graph, see for instance \cite{AlfaroValencia, BergetManionMaxwellPotechinReiner,  BrownMorrowZureick-Brown, CoriRossin}. 
Determining the critical group for certain families of graphs continues to be an active area of research. 
There exists work where the critical group has been partially determined for some families of graphs, for instance see \cite{Bai, ChenHou, HouLeiWoo, LiangPanWang, WangPan, WangPanXu, ZhouChen}.
Additionally, there is a growing body of work where the complete critical group structure for families of graphs is determined, see for instance \cite{Biggs, ChenMohar, ChristiansonReiner, JacobsonNiedermaierReiner, Levine, Lorenzini, Lorenzini2, Merris, Musiker, ShenHou, Toumpakari}.

The family of graphs that we study are those which we call \emph{hinge graphs}.
These are graphs that can be intuitively understood by taking multiple base shapes and ``gluing" them together on a single shared edge and two corresponding shared vertices.
In \cite{CoriRossin}, Cori and Rossin show that the critical group of a planar graph $G$ is isomorphic to the critical groups of the dual of $G$.
It so happens that hinge graphs are dual to a family of graphs known as \emph{thick cycle graphs}, which are cycle graphs where multiple edges are allowed, and were studied in \cite{MSRI-UP, Alfaro}.
Furthermore, thick cycle graphs can be seen as specializations of outerplanar graphs studied in \cite{AlfaroVillagran}.

The study of hinge graphs arose independently and was motivated primarily in attempt to answer the question proposed in \cite{GlassKaplan} on how divisors generate critical groups in cases where the group is non-cyclic.
Hinge graphs, especially those containing identical copies of the same base shape, are some of the simplest examples of graphs with non-cyclic critical groups, and whose behavior can be thoroughly studied.
In addition, the study of hinge graphs has led to observations about proving linear equivalence and the order of divisors which provides a streamlined approach to the investigation of divisors and critical groups of graphs more generally.
As mentioned before, hinge graphs can be observed to be the dual graphs of thick cycle graphs, thus the critical group of a hinge graph is isomorphic to the critical group of a thick cycle graph, whose complete structure is given in Theorem 2.29 in \cite{Alfaro} and Theorem 1 in \cite{MSRI-UP}.
Despite there being literature about the structure of the critical groups of thick cycle graphs, and hence an alternate route to the study of hinge graphs, this connection was not previously made.
We provide a novel approach using divisors that generate the critical group of hinge graphs which is not explored in the existing literature. 
We emphasize that the proofs in this paper are new and rely solely on generating divisors and were written with a deliberate choice to avoid using the theory of reduced divisors.

Our main contributions include proofs of formulas for the order and structure of the critical group of hinge graphs with same base shapes, and using tools such as the graph Laplacian, to determine the orders of important group elements.
Using similar techniques to those used to prove these results, we generalize some to the case where we have hinge graphs with different base cycles. 
For what follows $\mathcal{H}_{k,n}$ denotes the hinge graph with $k$ vertices on each base shape and $n$copies of the base shape.
For different cycles, $\mathcal{H}_{k_1-1,k_2-1,\dots, k_n-1}$ denotes the hinge graph with $k_i$ vertices on each base shape.
The critical group of a graph $G$ is denoted $K(G)$. 
\vspace{0.05cm}

\begin{reptheorem}{thm:order}
  Given a hinge graph $\mathcal{H}_{k,n}$, the order of the critical group $K(\mathcal{H}_{k,n})$ is $$|K(\mathcal{H}_{k,n})|=(k-1)^{n-2}(k-1)(k+n-1).$$ 
\end{reptheorem}
\vspace{0.05cm}

\begin{reptheorem}{thm:critical_group}
The critical group $K(\mathcal{H}_{k,n})$ is isomorphic to
$$(\Z/(k-1)\Z)^{n-2}\oplus (\Z/(k-1)(k+n-1)\Z).$$
\end{reptheorem}
\vspace{0.05cm}

\begin{reptheorem}{thm:general_order}
Consider a hinge graph with different base shapes $\mathcal{H}_{k_1-1,\dots,k_n-1}$, the order of\\ $K(\mathcal{H}_{k_1-1,\dots,k_n-1})$ is
\begin{equation*}
|K(\mathcal{H}_{k_1-1,\dots,k_n-1})| = a + a/(k_1-1) + \dots + a/(k_n-1),
\end{equation*}
where $a:=(k_1-1)\cdots(k_n-1)$.
\end{reptheorem}
\vspace{0.05cm}

This paper is organized as follows. 
In Section \ref{sec:prelims} we provide background and preliminaries on divisors, the critical group, and  hinge graphs. 
In Section \ref{sec:samebase} we prove several results for hinge graphs where each base shape is an identical cycle, including the explicit structure of the critical group and the behavior of some noteworthy divisors (see Proposition \ref{lemma:order_of_divisors}).
In Section \ref{sec:different_bases} we generalize many of the aforementioned results to hinge graphs where the number of vertices on each base shape can vary (see Theorem \ref{thm:critical}). 
We conclude in Section \ref{sec:conclusion} with some directions for future research.


\section{Preliminaries}\label{sec:prelims}

In this section we review some key definitions and theorems, as well as introduce several definitions pertaining to our specific case study. 
We begin by briefly detailing divisors and critical groups on graphs.
We take our graphs to be connected, undirected, and any two vertices are connected by at most one edge. 
Note that a consequence of these conditions is that graphs cannot contain loops.
These are the conditions under which much of the existing critical group literature has focused on.

Let $V(G)$ and $E(G)$ refer to the set of vertices and edges of a graph $G$, respectively. 
A cycle graph is a graph where every vertex has valence two.
These are most often thought of as regular polygons (a convention we will adopt).

\begin{definition}
A \emph{divisor} (or \emph{chip configuration}) on a graph $G$ is a formal $\Z$-linear combination of vertices of $G$,
$$D = \sum_{v\in V(G)} v \cdot D(v).$$
The \emph{degree of a divisor} $D$ is the integer $\deg(D):=\sum_{v\in V(G)}D(v)$.
\end{definition}

\begin{definition}
A \emph{firing of a vertex} is the operation taking the divisor $D$ to a divisor $D'$ where
\begin{equation*}
D'(v) = 
    \begin{cases}
        D(v) - \val(v), & \text{if } v = w,\\
        D(v)+ \# \text{ edges between } v \text{ and } w, & \text{if } v \neq w.
    \end{cases}
\end{equation*}
This is referred to as a chip-firing move or chip-firing operation.
We say that two divisors $D$ and $D'$ are \emph{chip-firing equivalent} or \emph{linearly equivalent} if $D'$ can be obtained from $D$ via a sequence of chip-firing moves. 
The \emph{order of a divisor} $D$ is the smallest positive integer $z$ with $zD$ linearly equivalent to the zero divisor.
\end{definition}
In our specific case study, the number of edges between two vertices will always be at most $1$. 
Note also that the chip-firing operation is commutative, and hence firings can be thought of as happening simultaneously.
By convention, if a divisor consists of a 0 associated with a vertex, that vertex is left blank.

The \emph{graph Laplacian} $L$ is defined as $A-M$, where $A$ denotes the adjacency matrix of the graph, and $M$ is the diagonal matrix with the valences of each vertex in $V(G)$.

\begin{definition}
The \emph{critical group} $K(G)$ of a graph $G$ is the torsion subgroup of the cokernel of the graph Laplacian $\cok(L)$.
\end{definition}
\noindent Note that elements of the critical group necessarily have degree $0$, and have order consistent with the definition of order of a divisor.

\begin{theorem}[Corollary 3, \cite{GlassKaplan}]\label{thm:trees}
The order of the critical group $K(G)$ is the number of spanning trees of $G$.
\end{theorem}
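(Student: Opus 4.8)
The plan is to pass from the full Laplacian to a reduced Laplacian, identify the order of its cokernel with a determinant, and evaluate that determinant combinatorially via the Matrix--Tree (Kirchhoff) formula. First I would make precise the relationship between $\cok(L)$ and the torsion group $K(G)$. For a connected graph on $n$ vertices, $L$ is symmetric with all row and column sums equal to $0$, so the all-ones vector spans $\ker L$ and $\operatorname{rank} L = n-1$. Since every column of $L$ has degree $0$, we have $\operatorname{im}(L) \subseteq H$, where $H := \{x \in \Z^n : \sum_i x_i = 0\}$, and the degree map identifies $\Z^n/H \cong \Z$; the resulting short exact sequence splits, giving $\cok(L) \cong \Z \oplus \bigl(H/\operatorname{im}(L)\bigr)$. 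The finite summand $H/\operatorname{im}(L)$ is exactly the torsion subgroup $K(G)$, consistent with the remark that its elements have degree $0$. Fixing a vertex $q$ and letting $\tilde{L}$ be the reduced Laplacian obtained by deleting the row and column indexed by $q$, I would check that forgetting the $q$-coordinate restricts to an isomorphism $H \xrightarrow{\sim} \Z^{n-1}$ carrying $\operatorname{im}(L)$ onto $\operatorname{im}(\tilde{L})$ --- here the zero-sum relation $\mathrm{col}_q = -\sum_{j \neq q}\mathrm{col}_j$ shows the deleted column is redundant --- so that $K(G) \cong \cok(\tilde{L})$.

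Next, because $G$ is connected, $\tilde{L}$ is an $(n-1)\times(n-1)$ integer matrix with $\det \tilde{L} \neq 0$. For any such matrix $M$, the Smith normal form $M = U D V$ with $U,V \in \mathrm{GL}_{n-1}(\Z)$ and $D = \operatorname{diag}(d_1,\dots,d_{n-1})$ gives $\cok(M) \cong \bigoplus_i \Z/d_i\Z$, so $|\cok(M)| = \prod_i d_i = |\det M|$. Applying this to $\tilde{L}$ yields $|K(G)| = |\det \tilde{L}|$, reducing the theorem to the identity $\det \tilde{L} = \tau(G)$, where $\tau(G)$ denotes the number of spanning trees of $G$.

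For the final identity I would orient each edge arbitrarily to form the signed incidence matrix $\partial \in \Z^{V \times E}$, whose $e$-th column has a single $+1$, a single $-1$, and zeros elsewhere, so that $L = \partial\partial^{\mathsf T}$ and, after deleting row $q$, $\tilde{L} = \tilde{\partial}\,\tilde{\partial}^{\mathsf T}$. The Cauchy--Binet formula then gives $\det \tilde{L} = \sum_{S} \det(\tilde{\partial}_S)^2$, where $S$ ranges over the $(n-1)$-element subsets of $E$ and $\tilde{\partial}_S$ is the corresponding $(n-1)\times(n-1)$ submatrix. The combinatorial heart of the proof --- which I expect to be the main obstacle --- is the claim that $\det(\tilde{\partial}_S) \in \{-1,0,1\}$, equal to $\pm 1$ precisely when $S$ is the edge set of a spanning tree and $0$ otherwise. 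I would prove this by induction: if $S$ spans a tree, it has a leaf, whose incident edge contributes a column with a single nonzero (necessarily $\pm 1$) entry, and Laplace expansion along that column reduces to the incidence determinant of a smaller tree; if instead the edges of $S$ contain a cycle, an appropriately signed sum of the corresponding columns vanishes, forcing $\det(\tilde{\partial}_S) = 0$. Summing the resulting $(\pm 1)^2 = 1$ contributions over all spanning trees gives $\det \tilde{L} = \tau(G)$, and combined with the previous steps this establishes $|K(G)| = \tau(G)$.
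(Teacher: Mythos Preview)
Your argument is correct and is essentially the standard proof of this classical fact: reduce to the cokernel of a reduced Laplacian, use Smith normal form to identify $|K(G)|$ with $|\det \tilde L|$, and then evaluate $\det \tilde L$ via Cauchy--Binet and the incidence-matrix characterization of spanning trees.

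However, there is nothing to compare against: the paper does not supply its own proof of this statement. Theorem~\ref{thm:trees} is quoted as Corollary~3 of the Glass--Kaplan survey and is used as a black box throughout (for instance in the proof of Theorem~\ref{thm:order}, where spanning trees are counted directly). So your proposal is not an alternative to the paper's approach; it is a proof of a result the paper simply imports from the literature. If your goal was to reconstruct the cited result, you have done so; if your goal was to match the paper's argument, note that no such argument exists here.
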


While it is sufficient to consider any divisor as an element of the critical group, we are mainly interested in a particular type of divisor that can be thought of as a representative element modulo chip-firing equivalence. 

\begin{definition}
Let $K(G)$ be the critical group of a graph $G$.
We say a divisor $D$ is a $q$-reduced divisor if for any vertex $q$ of $D$,
\begin{enumerate}
    \item $D(v) \geq 0$ for all  $v \neq q$  and 
    \item for every nonempty subset of vertices $V' \subseteq V(G)\setminus \{q\}$, if we take $D(v)$ and fire every vertex in $V'$, then some vertex in $V'$ has associated integer $r < 0$.
\end{enumerate}
\end{definition}
\noindent Note that $q$-reduced divisors are not used explicitly in this paper, but it is worth mentioning that all divisors discussed will be multiples of $q$-reduced divisors.

Next, we introduce definitions specific to a particular family of graphs.

\begin{definition}\label{def:hinge}
A \emph{hinge graph} is a graph constructed by ``adjoining" or ``gluing" several cycle graphs via a shared edge and consequent pair of vertices. 
We call the cycles used to construct the hinge graph the \emph{base shapes}, and will sometimes refer to them as \emph{cycles} of the hinge graph.
\begin{itemize}
  \setlength\topsep{.5em}
  \setlength\itemsep{.5em}
    \item In the case when all base shapes are identical, these hinge graphs are denoted  $\mathcal{H}_{k,n}$, where $k$ is the number of vertices of the base shape (including the pair of shared vertices) and $n$ is the number of copies of the base shape.
    \item For different cycles, we instead use the notation $\mathcal{H}_{k_1-1,k_2-1,\dots, k_n-1}$, where $k_i$ refers to the number of vertices of each base shape. (Note that this notation differs from identical base shapes since we use $k_i-1$ instead of $k_i$ for its usefulness in the proofs of forthcoming results.)
\end{itemize}
\end{definition}

\noindent Refer to Figure \ref{fig:div} for examples of hinge graphs.
In the case of cycles with four vertices, which are taken as squares, these graphs are called \emph{book graphs} as their structure resembles that of several pages. 

In what follows, attaching another copy of the base shape to an existing graph with the same shared hinge will be referred to as \emph{adding a copy} of the base shape.
In cases where spanning trees are counted, the deletion of an edge while retaining both vertices will be referred to as \emph{removing} an edge.

\begin{figure}[ht]
    \centering
    \includegraphics[scale=.4, trim = 0cm 4cm 0cm 4cm, clip]{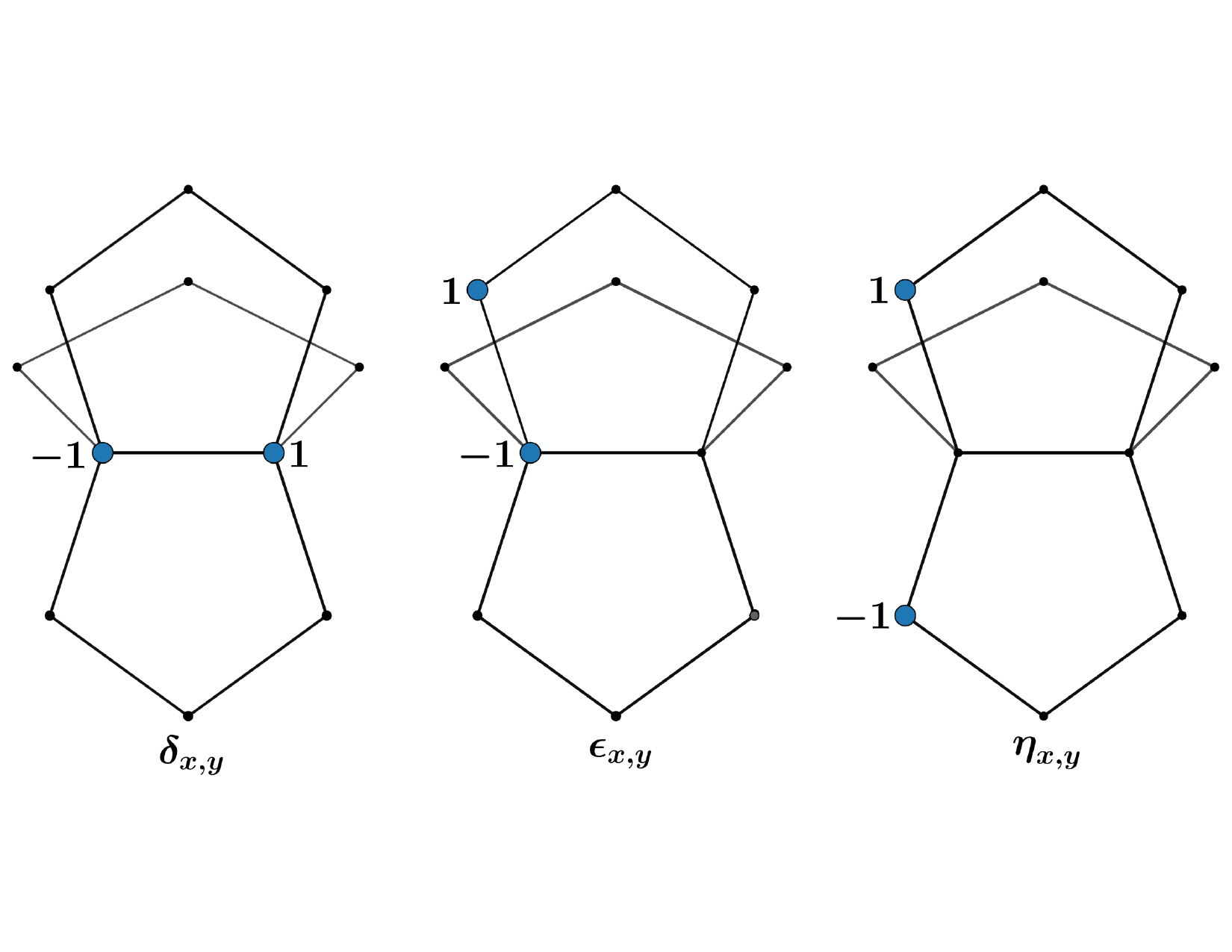}
    \caption{Types of divisors studied: $\delta_{x,y}$ (left), $\epsilon_{x,y}$ (center), and $\eta_{x,y}$ (right).}
    \label{fig:div}
\end{figure}

We will focus our attention on three distinct types of divisors on these graphs, which will be referred to as $\delta_{x,y}$, $\epsilon_{x,y}$, and $\eta_{x,y}$, following the notation in \cite{GlassKaplan}.
All three divisors have degree $0$, with zeroes assigned to all vertices except two, which are assigned a $1$ and $-1$.
The difference between these divisors lies in the location of the vertices associated with nonzero values. The divisor 
\begin{itemize}
  \setlength\topsep{.5em}
  \setlength\itemsep{.5em}
    \item $\delta_{x,y}$ consists of a $1$ and $-1$ on the shared pair of vertices,
    \item $\epsilon_{x,y}$ consists of a $1$ and $-1$ on a shared vertex and an adjacent vertex on a cycle, and
    \item $\eta_{x,y}$ consists of a $1$ and $-1$ on two vertices adjacent to the same shared vertex, but on different cycles.
\end{itemize}  
When enumeration of these divisors is important, as in Lemma \ref{lemma:eta_basis} when choosing a minimal generating set of $\eta_{x,y}$'s or Theorem \ref{thm:diffepsilon} where the order of $\epsilon_{x,y}$ depends on the base shape chosen, we will denote the divisors for each base shape by $\epsilon_{x,y,i}$ and $\eta_{x,y,i}$.
Examples of all three divisors can be seen in Figure \ref{fig:div}.
As we shall see, the distinction of which vertex is assigned a positive or negative value is irrelevant.

Some of the following propositions are well-known in the literature, but we reiterate them here as they will be useful in the forthcoming proofs. 

\begin{proposition}\label{thm:obs1} 
The sum of two divisors (via summing integers at each vertex) corresponds identically to the sum of two elements in the critical group. 
\end{proposition}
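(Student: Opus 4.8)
The plan is to unwind the definition of the critical group $K(G) = \cok(L)$ as a quotient group and observe that the claimed correspondence is essentially the statement that the quotient map is a group homomorphism. First I would set up notation: let $\Z^{V(G)}$ denote the free abelian group on the vertex set, so that divisors are precisely elements of $\Z^{V(G)}$, with addition defined coordinatewise (i.e., by summing the integers assigned to each vertex). The image of the Laplacian $L$, viewed as a map $\Z^{V(G)} \to \Z^{V(G)}$, is the subgroup of \emph{principal divisors} — exactly those divisors obtainable from the zero divisor by a sequence of chip-firing moves, since firing the vertex $v$ corresponds to subtracting the column (or row, by symmetry of $L$) of $L$ indexed by $v$. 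Thus linear equivalence of divisors $D \sim D'$ is the same as $D - D' \in \operatorname{im}(L)$, and $K(G) = \Z^{V(G)} / \operatorname{im}(L)$ as an abelian group.

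The key step is then to note that the natural surjection $\pi \colon \Z^{V(G)} \to K(G)$ is, by construction of a quotient of abelian groups, a group homomorphism: $\pi(D_1 + D_2) = \pi(D_1) + \pi(D_2)$, where the left-hand sum is the coordinatewise (vertex-by-vertex) sum of divisors and the right-hand sum is the group operation in $K(G)$. Since every element of $K(G)$ is represented by some divisor, and two divisors represent the same element of $K(G)$ precisely when they are linearly equivalent, this shows that adding divisors entrywise and then passing to $K(G)$ gives the same result as adding the corresponding classes in $K(G)$. I would also remark that degree is additive on divisors and that $\operatorname{im}(L)$ consists of degree-zero divisors (each column of $L$ sums to zero), which is consistent with the earlier observation that elements of $K(G)$ have degree zero — though this is not strictly needed for the statement.

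There is no real obstacle here; the content is entirely bookkeeping, and the only thing to be careful about is making the identification ``divisor = element of $\Z^{V(G)}$'' and ``chip-firing = subtracting a column of $L$'' completely explicit, so that ``sum of divisors'' and ``sum in the cokernel'' are manifestly the same operation. The mild subtlety worth a sentence is well-definedness: if $D_1 \sim D_1'$ and $D_2 \sim D_2'$, then $D_1 + D_2 \sim D_1' + D_2'$, because $\operatorname{im}(L)$ is closed under addition; this is what guarantees the entrywise sum descends to a well-defined operation on classes, matching the group law on $K(G)$.
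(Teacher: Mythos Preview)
Your proposal is correct. It is also cleaner and more direct than the paper's own argument, and the two do take somewhat different routes.

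The paper argues via $q$-reduced divisors: it asserts that the critical group is closed under addition, so the entrywise sum of two $q$-reduced divisors is linearly equivalent to some $q$-reduced divisor, and since $q$-reduced divisors are in bijection with elements of $K(G)$, this forces the addition to agree with the group law. Your approach instead bypasses $q$-reduced representatives entirely and works straight from the definition $K(G)=\cok(L)=\Z^{V(G)}/\operatorname{im}(L)$: the quotient map is a homomorphism by construction, so entrywise addition of divisors descends to the group operation on classes, with well-definedness handled by $\operatorname{im}(L)$ being a subgroup. What you gain is that the argument is purely structural and makes no appeal to the existence or uniqueness of $q$-reduced divisors; what the paper's phrasing gains (if anything) is that it stays in the language of distinguished representatives that the rest of the paper uses. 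Substantively, both are just saying that addition in a quotient of abelian groups is induced by addition upstairs, and your version makes that explicit.
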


We note that the aforementioned statement follows as a consequence of the definition of the critical group, but its impact on our results is significant enough to warrant it as a proposition.

\begin{theorem}[Theorem 11, \cite{GlassKaplan}]\label{thm:kernel}
For any finite connected graph $G$, the null space of the Laplacian
matrix of $G$ is generated by the vector $\mathbf{1}$.
\end{theorem}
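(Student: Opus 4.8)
The plan is to prove the two inclusions $\operatorname{span}(\mathbf{1}) \subseteq \ker L$ and $\ker L \subseteq \operatorname{span}(\mathbf{1})$ separately. The first is a direct computation; the second is where connectivity must be used. First I would write the Laplacian $L = A - M$ entrywise. Since $G$ is simple with no loops, the diagonal entry of $L$ in the row indexed by a vertex $u$ is $A_{uu} - \val(u) = -\val(u)$, while the off-diagonal entries in that row are the adjacency entries $A_{uv} \in \{0,1\}$, whose sum across the row is exactly the number of neighbors of $u$, namely $\val(u)$. Hence every row of $L$ sums to zero, which is precisely the statement $L\mathbf{1} = \mathbf{0}$. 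This gives $\operatorname{span}(\mathbf{1}) \subseteq \ker L$ and shows the null space has dimension at least one.

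For the reverse inclusion I would take an arbitrary $\mathbf{x} = (x_v)_{v \in V(G)}$ with $L\mathbf{x} = \mathbf{0}$ and rewrite the equation indexed by vertex $u$, using the entrywise description above, as
\begin{equation*}
\val(u)\, x_u = \sum_{v \sim u} x_v,
\end{equation*}
so that $x_u$ equals the average of the values of $\mathbf{x}$ on the neighbors of $u$. I would then invoke a discrete maximum principle. Let $\mu = \max_{v} x_v$ and choose a vertex $w$ attaining it. The averaging identity forces $\mu = x_w = \tfrac{1}{\val(w)}\sum_{v \sim w} x_v \leq \mu$, with equality only if every neighbor of $w$ also attains $\mu$. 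Thus the set of vertices on which $\mathbf{x}$ equals $\mu$ is nonempty and closed under passing to neighbors; since $G$ is connected, this set is all of $V(G)$. Therefore $x_v = \mu$ for every $v$, i.e.\ $\mathbf{x} = \mu\,\mathbf{1} \in \operatorname{span}(\mathbf{1})$. Combining the two inclusions yields $\ker L = \operatorname{span}(\mathbf{1})$, a one-dimensional space, as claimed.

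The main obstacle is the propagation step, which is exactly where the connectedness hypothesis is indispensable: on a graph with several components the null space has dimension equal to the number of components, so any correct argument must consume connectivity precisely here. An alternative route to the same conclusion, which I would mention as a remark, uses the quadratic form: one verifies the identity $-\mathbf{x}^{\mathsf{T}} L\,\mathbf{x} = \sum_{\{u,v\} \in E(G)} (x_u - x_v)^2$, so that $L\mathbf{x} = \mathbf{0}$ forces $x_u = x_v$ across every edge, and connectivity again forces $\mathbf{x}$ to be constant. Either way, connectivity enters through the fact that any two vertices are joined by a path, letting equality of adjacent entries propagate to the whole graph.
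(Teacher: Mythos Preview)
Your argument is correct and is the standard proof of this well-known fact. Note, however, that the paper does not actually supply its own proof of this statement: it is quoted verbatim as Theorem~11 of \cite{GlassKaplan} and used as a black box (the paper only draws the consequence that borrowing at a vertex is equivalent to firing all other vertices). So there is no ``paper's proof'' to compare against here.

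For completeness: your two-inclusion strategy is exactly right, and both routes you give for the nontrivial direction---the discrete maximum principle and the quadratic-form identity $-\mathbf{x}^{\mathsf{T}} L\,\mathbf{x} = \sum_{\{u,v\}\in E(G)} (x_u - x_v)^2$---are valid and standard. You correctly handled the paper's sign convention $L = A - M$ (diagonal entries $-\val(u)$), and you correctly isolated connectivity as the hypothesis consumed in the propagation step.
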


As a consequence, borrowing from a vertex is equivalent to firing every other vertex (by adding $\mathbf{1}$ to the vector
$\mathbf{r}=(0,0, \dots 0, -1, 0, \dots 0)^T$, the number of times each vertex is fired), where the $-1$ means borrowing from the $i$th vertex. 
Therefore, borrowing can be thought of as the inverse of firing. 

\begin{proposition}\label{thm:obs2a}
Let $\mathbf{a}$ and $\mathbf{b}$ be two vector representations of divisors for a graph $G$ and let 
$M$ be the augmented matrix of $L$ and $\mathbf{a}-\mathbf{b}$. 
If all entries in $M$ after row reduction are integers, $\mathbf{a}$ and $\mathbf{b}$ are linearly equivalent. 
\end{proposition}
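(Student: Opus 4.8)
The plan is to recast linear equivalence of divisors as the solvability of an integer linear system driven by the Laplacian $L$, and then to observe that the integrality hypothesis on the row-reduced matrix is exactly what forces an integer solution to exist. Combining the definition of a firing with the identity $L = A - M$, and with the remark following Theorem~\ref{thm:kernel}, firing a vertex $w$ corresponds to adding the $w$-th column of $L$ to the divisor vector, while borrowing from $w$ subtracts it; hence carrying out a sequence of firings and borrowings recorded by an integer \emph{firing vector} $\mathbf{f}$ sends a divisor $\mathbf{b}$ to $\mathbf{b} + L\mathbf{f}$. Consequently $\mathbf{a}$ and $\mathbf{b}$ are linearly equivalent precisely when there is an integer vector $\mathbf{f}$ with $L\mathbf{f} = \mathbf{a} - \mathbf{b}$, and the task is to extract such an $\mathbf{f}$ from the hypothesis on $M = [\,L \mid \mathbf{a}-\mathbf{b}\,]$.

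First I would check consistency of the system: since the divisors under consideration have degree $0$, the vector $\mathbf{a}-\mathbf{b}$ is orthogonal to $\mathbf{1}$, which by Theorem~\ref{thm:kernel} spans the kernel of the symmetric matrix $L$; hence the system $L\mathbf{f} = \mathbf{a}-\mathbf{b}$ is solvable over $\mathbb{Q}$, and in any row echelon form of $M$ the unique zero row coming from $L$ is matched by a $0$ in the augmented column. Next I would carry out the row reduction and write the reduced row echelon form of $M$ as $[\,R \mid \mathbf{s}\,]$. Because $L$ has rank $|V(G)|-1$ with a one-dimensional kernel (again Theorem~\ref{thm:kernel}), $R$ has exactly one non-pivot column, indexed by some vertex $q$; setting $f_q = 0$ and reading off the remaining coordinates of $\mathbf{f}$ as the corresponding entries of $\mathbf{s}$ gives a solution of $R\mathbf{f} = \mathbf{s}$, and therefore of $L\mathbf{f} = \mathbf{a}-\mathbf{b}$, since elementary row operations leave the solution set unchanged. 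By hypothesis every entry of $[\,R \mid \mathbf{s}\,]$, and in particular of $\mathbf{s}$, is an integer, so this $\mathbf{f}$ is an integer vector. Then $\mathbf{a} = \mathbf{b} + L\mathbf{f}$ is obtained from $\mathbf{b}$ by the firings and borrowings encoded in $\mathbf{f}$, which is the claimed linear equivalence.

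I do not anticipate a genuine obstacle; the steps that need care are bookkeeping. One must insist that ``row reduction'' is performed by elementary row operations, so that the associated system is transformed equivalently, and one must use the precise shape of the reduced row echelon form, together with the one free variable guaranteed by Theorem~\ref{thm:kernel}, to see that the free coordinate can be pinned to $0$ without destroying integrality. It is also worth recording in a remark that the criterion is sufficient but not necessary --- an integer firing vector can exist even when row reduction over $\mathbb{Q}$ produces denominators --- and that the consistency step, hence the whole conclusion, implicitly uses that $\mathbf{a}$ and $\mathbf{b}$ have the same degree, as is the case for every divisor considered in this paper.
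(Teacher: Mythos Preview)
Your proposal is correct and follows essentially the same approach as the paper: interpret the augmented system $[\,L\mid \mathbf{a}-\mathbf{b}\,]$ as the search for a firing vector, use the one-dimensional kernel of $L$ (Theorem~\ref{thm:kernel}) to see that row reduction leaves a single free variable and a single zero row, and read off an integer firing vector from the augmented column. Your write-up is somewhat more careful than the paper's---you make the consistency check via $(\mathbf{a}-\mathbf{b})\perp\mathbf{1}$ explicit and pin the free coordinate to $0$---whereas the paper routes the last step through Proposition~\ref{thm:obs1} to reduce to $\mathbf{a}-\mathbf{b}\sim 0$; these are cosmetic differences only.
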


\begin{proof}
It is a widely known fact in linear algebra that augmenting any matrix with a vector is equivalent to solving the vector equation $A\bx=\mathbf{b}$. 
When we employ this construction with the graph Laplacian and augmenting the divisor in the final column, solving this equation is equivalent to finding the vector $\mathbf{r}$, which is the number of times each vertex must be fired to obtain the divisor. 
Since the graph Laplacian has kernel of dimension $1$, by Theorem \ref{thm:kernel}, there will be a last row of zeroes in the matrix, and every value will be expressible in terms of the last column, hence we can simply read off the values in the final column.
If all of these values are integers, we know the divisor is linearly equivalent to the zero divisor, since that means the divisor can be formed by firing vertices an integral number of times. 
Combining this with Proposition \ref{thm:obs1}, two divisors are linearly equivalent if their difference is linearly equivalent to the zero divisor, and hence we can use this method to check if any two divisors are linearly equivalent.
\end{proof}

\begin{proposition}\label{thm:obs2b}
Let $n\mathbf{d}$ be the vector representation of a multiple $n$ of a divisor $\delta$ on a graph $G$, and let $M$ be the augmented matrix of $L$ and $n\mathbf{d}$.
If the non-unital entries of $M$ after row reduction are integers with greatest common divisor $1$, and furthermore if this greatest common divisor is invariant under addition of the vector $\mathbf{1}$, then $n = |\delta|$.
\end{proposition}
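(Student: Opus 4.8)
The plan is to verify the two defining conditions of the order $|\delta|$ separately: first that $n\delta$ is linearly equivalent to the zero divisor, so that $|\delta|\mid n$; and then that no \emph{proper} divisor of $n$ has this property, so that in fact $|\delta|=n$.

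For the first part I would quote Proposition~\ref{thm:obs2a} almost verbatim. Row-reducing $M=[\,L\mid n\mathbf{d}\,]$ and reading off the last column (using that $\ker L$ is one-dimensional by Theorem~\ref{thm:kernel}, so the last row vanishes and the system is solvable up to the single free parameter) yields a vector $\mathbf{r}$ with $L\mathbf{r}=n\mathbf{d}$. The hypothesis that these entries are integers says $\mathbf{r}\in\Z^{|V(G)|}$, so $n\delta$ is obtained from the zero divisor by firing each vertex $v$ exactly $\mathbf{r}(v)$ times, negative values being borrowings (legitimate by the remark after Theorem~\ref{thm:kernel}). Hence $n\delta\sim 0$, and since $\{z\in\Z_{>0}: z\delta\sim 0\}$ is generated by $|\delta|$, we get $|\delta|\mid n$; in particular $|\delta|\le n$, with equality unless $|\delta|$ is a proper divisor of $n$.

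For the second part, suppose toward a contradiction that $|\delta|<n$ and set $m:=|\delta|$, so $m\mid n$, $1\le m<n$, and $m\delta\sim 0$. Then some integer vector $\mathbf{s}$ satisfies $L\mathbf{s}=m\mathbf{d}$, and scaling by the integer $n/m$ gives $L\!\left(\tfrac{n}{m}\mathbf{s}\right)=n\mathbf{d}=L\mathbf{r}$. Thus $\tfrac{n}{m}\mathbf{s}-\mathbf{r}\in\ker L=\R\mathbf{1}$ (Theorem~\ref{thm:kernel}), and being an integer vector it equals $t\mathbf{1}$ for some $t\in\Z$. Therefore $\mathbf{r}+t\mathbf{1}=\tfrac{n}{m}\mathbf{s}$, so every entry of $\mathbf{r}+t\mathbf{1}$ is divisible by $n/m>1$. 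This contradicts the hypothesis on $M$: ``the gcd of the entries is $1$ and is invariant under addition of $\mathbf{1}$'' should be read as saying that the gcd of the entries of $\mathbf{r}+t'\mathbf{1}$ equals $1$ for \emph{every} $t'\in\Z$, which is exactly what excludes the translate just produced. Hence no such $m$ exists and $n=|\delta|$.

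The argument is short; the only point requiring care is the non-uniqueness of the firing vector. Because $L$ has a one-dimensional kernel, $\mathbf{r}$ is determined only up to integer multiples of $\mathbf{1}$, and the event ``$m\delta\sim 0$ for a proper divisor $m\mid n$'' is precisely the event that \emph{some} translate $\mathbf{r}+t\mathbf{1}$ has all entries divisible by $n/m$. So the ``invariant under addition of $\mathbf{1}$'' clause is doing genuine work, and the main (essentially only) task in a careful write-up is to phrase it as a statement about all $\Z$-translates of the firing vector and then apply the gcd hypothesis to the right vector. I anticipate no computational obstacle.
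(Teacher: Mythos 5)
Your proposal is correct and follows essentially the same route as the paper's own proof: read off the firing vector $\mathbf{r}$ from the row-reduced augmented matrix, note it is unique only up to integer multiples of $\mathbf{1}$, and use the translate-invariant gcd condition to rule out any proper divisor $m$ of $n$ with $m\delta\sim 0$. Your write-up is in fact more careful than the paper's (which only gestures at ``fractional components of $\mathbf{r}$''), since you make explicit the key step that $\tfrac{n}{m}\mathbf{s}-\mathbf{r}$ lies in $\ker L=\R\mathbf{1}$ and is therefore an integer translate of $\mathbf{r}$ all of whose entries are divisible by $n/m$.
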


\begin{proof}
Using the construction outlined in Proposition \ref{thm:obs2a}, if we multiply the divisor, augment the graph Laplacian with the multiplied divisor, and take its Reduced Row Echelon Form, we will obtain the corresponding vector $\mathbf{r}$, unique up to addition of $\mathbf{1}$. 
Note that $\mathbf{r}$ will only have integer entries if the divisor is linearly equivalent to the zero divisor.
Through this process, if we obtain a set of integers whose greatest common divisor equals $1$, and furthermore adding a multiple of $\mathbf{1}$ does not change this greatest common divisor, we have discovered the smallest multiple of the divisor for which we have linear equivalence to the zero divisor.
This is because if there is no common divisor, dividing by any integer leaves us with fractional components of $\mathbf{r}$, and adding or subtracting multiples of $\mathbf{1}$ does not change this fact.
Therefore, we can use this result to prove the order of any divisor.
\end{proof}

As the following lemma is useful to prove results for hinge graphs with different base shapes $\mathcal{H}_{k_1-1, \dots k_n-1}$, we consider $k_i-1$ rather than $k_i$.

\begin{lemma}\label{lemma:bgcd}
Let $b=\lcm(k_1-1\dots, k_n-1)$.
Then $$\gcd(b/(k_1-1),\dots, b/(k_n-1)) = 1.$$
\end{lemma}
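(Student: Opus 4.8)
The plan is to prove the statement one prime at a time via $p$-adic valuations, which is the cleanest way to control both the $\lcm$ defining $b$ and the quotients $b/(k_j-1)$. For a prime $p$, write $v_p$ for the $p$-adic valuation; since $b=\lcm(k_1-1,\dots,k_n-1)$ we have $v_p(b)=\max_{1\le m\le n}v_p(k_m-1)$. Hence each quotient appearing in the gcd satisfies
\[
v_p\!\left(\frac{b}{k_j-1}\right)=\max_{1\le m\le n}v_p(k_m-1)-v_p(k_j-1),
\]
a nonnegative integer that equals $0$ exactly when $k_j-1$ attains the maximal $p$-adic valuation among $k_1-1,\dots,k_n-1$. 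This reduces the whole lemma to a combinatorial statement about where these maxima are attained.

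First I would restate the goal in this language. The gcd of the $n-1$ quotients $b/(k_j-1)$ with $j\neq i$ equals $1$ if and only if no prime divides all of them, i.e.\ if and only if for every prime $p$ there is an index $j\neq i$ with $v_p(b/(k_j-1))=0$. By the displayed identity this is precisely the assertion that \emph{for every prime $p$, the maximum of $v_p(k_1-1),\dots,v_p(k_n-1)$ is attained at some index other than $i$.} Thus everything comes down to showing that the omitted factor $k_i-1$ never single-handedly carries the largest power of a prime.

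Next I would translate that non-domination property into a divisibility and verify it. The maximal $p$-adic valuation is forced onto index $i$ alone exactly when $v_p(k_i-1)>v_p(k_j-1)$ for all $j\neq i$; the absence of any such prime is equivalent to $k_i-1\mid\lcm_{j\neq i}(k_j-1)$, equivalently to the invariance $\lcm_{j\neq i}(k_j-1)=b$. I would establish this divisibility from the way the index $i$ enters the construction, so that dropping the $i$-th base shape leaves the $\lcm$ unchanged. Once $\lcm_{j\neq i}(k_j-1)=b$ is known, for each prime $p$ some $j\neq i$ realizes $v_p(b)$, making that quotient a $p$-adic unit; since this holds for every $p$, no prime divides the whole family and the gcd is $1$.

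The main obstacle is exactly this last verification. For the weaker fact that all $n$ quotients have gcd $1$ the maximizer is allowed to be $i$ itself, so that statement holds unconditionally; here, by contrast, one must guarantee a maximizer \emph{distinct} from $i$ at every prime. This is delicate because if $k_i-1$ alone supplies the top power of some prime $p$, then every remaining quotient is divisible by $p$ and the gcd exceeds $1$. Securing $k_i-1\mid\lcm_{j\neq i}(k_j-1)$---and hence the equality $\lcm_{j\neq i}(k_j-1)=b$---is therefore the crux of the argument, and the step where the structural hypotheses on the base shapes of $\mathcal{H}_{k_1-1,\dots,k_n-1}$ must be invoked.
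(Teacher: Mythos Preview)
Your reduction via $p$-adic valuations is clean and correct up to the point you flag: the gcd of the $n-1$ quotients $b/(k_j-1)$, $j\neq i$, equals $1$ precisely when, for every prime $p$, the maximum of $v_p(k_m-1)$ is attained at some index $m\neq i$; equivalently, when $(k_i-1)\mid \lcm_{j\neq i}(k_j-1)$. You are right that this extra hypothesis is genuinely needed and cannot be conjured from the bare data $(k_1-1,\dots,k_n-1)$. For instance, with $(k_1-1,k_2-1,k_3-1)=(2,3,5)$ one has $b=30$, and for $i=3$ the two remaining quotients are $15$ and $10$, whose gcd is $5\neq 1$.

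The paper's proof takes a different, more elementary route---a direct divisibility argument rather than valuations---but it contains exactly the gap you isolate. It shows that $g$ divides $b$ and that $b/g$ is a common multiple of $k_j-1$ for all $j\neq i$, then concludes $g=1$ ``since $b$ is the least common multiple.'' That inference is only valid if $b$ equals $\lcm_{j\neq i}(k_j-1)$, i.e.\ if $(k_i-1)\mid \lcm_{j\neq i}(k_j-1)$, which is the very hypothesis you identify as missing. So your diagnosis is sharper than the paper's own treatment: the lemma is false as stated, and both arguments need the same supplementary condition. In the paper's actual application (the order of $\epsilon_{x,y}$ for different base shapes), that condition is explicitly assumed---there is another base shape whose $k_j-1$ is a multiple of $k_i-1$---so the use of the lemma is ultimately sound even though its standalone statement and proof are not.
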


For completeness, we detail the proof below.
\begin{proof} 
Let $$g:=\gcd(b/(k_1-1),\dots, b/(k_n-1)).$$ 
Then $g$ divides $b/(k_j-1)$ for all $1 \leq j \leq n$. 
Hence, for each $j$ there exists some $d_j$ such that $$g\cdot d_j=b/(k_j-1),$$ so $g\cdot d_j(k_j-1)=b$ and $g$ divides $b$.
Dividing through by $g$ we see that $$(k_j-1)(d_j)=b/g,$$ and therefore $(k_j-1)$ divides $b/g$, but this was true for all $j$ and hence $b/g$ must be a multiple of all $k_j-1$. 
Since $b$ is the least common multiple that means $g=1$. 
\end{proof}

\begin{remark}
This lemma holds if we eliminate one of the $k_i-1$ provided that the least common multiple does not change, which we make use of in the proof of Theorem \ref{thm:diffepsilon}.
\end{remark}

We use the above lemma to determine that the number of times that vertices are fired for our specific divisors, multiplied by their orders (e.g., $(k-1)\eta_{x,y}$), are coprime. 
Therefore, we obtain that specific multiple as the order of the divisor in the critical group.
However, this is not sufficient, as this coprimality between our elements may not be invariant under addition of the vector $\mathbf{1}$, the kernel of the graph Laplacian. 
We will use the next result in the proofs of Proposition \ref{diffdelta} and Theorem \ref{thm:diffepsilon} to ensure this is not the case. 

\begin{lemma}\label{thm:invariantgcd}
Let $\{n,2n,...mn\}$ and $\{m,2m,...nm\}$ be two sets of positive integers with $n,m$ coprime. Then there exists some element of each set such that they differ by exactly 1. 
\end{lemma}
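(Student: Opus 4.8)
The statement is equivalent to the assertion that there exist integers $a$ and $b$ with $1 \le a \le m$ and $1 \le b \le n$ for which $|an - bm| = 1$: the two promised elements are then $an$ from the first set and $bm$ from the second. Since interchanging $m$ and $n$ merely swaps the two sets, I would assume without loss of generality that $m \ge 2$; the only case left out is $m = n = 1$, which is degenerate (both sets equal $\{1\}$) and does not arise in our applications, where every base shape is a cycle so that $k_i - 1 \ge 2$.

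The plan is to extract $a$ and $b$ from B\'ezout's identity while controlling their size. Because $\gcd(m,n)=1$, the numbers $n, 2n, \dots, (m-1)n$ are pairwise incongruent modulo $m$ and none is divisible by $m$, so they realize every nonzero residue class mod $m$; in particular there is a (unique) $a \in \{1,\dots,m-1\}$ with $an \equiv -1 \pmod{m}$. Then $m \mid an+1$, so $b := (an+1)/m$ is a positive integer, and in particular $b \ge 1$. Moreover $an \le (m-1)n$ forces
\[
  b \;\le\; \frac{(m-1)n+1}{m} \;=\; n - \frac{n-1}{m} \;\le\; n .
\]
Hence $a \in \{1,\dots,m\}$, $b \in \{1,\dots,n\}$, and $bm - an = 1$ by the choice of $a$ --- exactly what is needed.

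I do not anticipate a genuine obstacle; this is a short exercise in elementary number theory. The two points that deserve a little care are the choice of the congruence $an \equiv -1 \pmod m$ rather than $an \equiv +1$, which is what makes $b$ a \emph{positive} integer (the $+1$ version can collapse to $b = 0$ in small cases), and the verification of the upper bound $b \le n$, which uses only $n - 1 \ge 0$. Equivalently, one is observing that $a/m$ and $b/n$ are neighbours in a Farey sequence, but the direct modular computation above is the most economical route.
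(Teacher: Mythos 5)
Your proof is correct, and it follows the same basic route as the paper's (a B\'ezout/modular-inverse argument), but it is actually the more complete of the two. The real content of the lemma is not the existence of integers $x,y$ with $xn-ym=\pm1$ --- that is plain B\'ezout --- but the fact that the coefficients can be taken in the ranges $1\le a\le m$ and $1\le b\le n$ so that $an$ and $bm$ actually lie in the two given finite sets. The paper's proof produces a $k$ with $k(m-n)\equiv 1$ and a $q$ with $kn-qm=1$ but never checks that $k\le m$ and $q\le n$ (nor that they are positive); your choice of $a$ as the unique solution of $an\equiv -1\pmod m$ in $\{1,\dots,m-1\}$, followed by the explicit estimate $b=(an+1)/m\le n-(n-1)/m\le n$, supplies exactly the missing verification. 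Your observation about the sign choice ($-1$ rather than $+1$, to keep $b\ge 1$) and your flagging of the genuinely degenerate case $m=n=1$ --- where the statement as written fails, since both sets are $\{1\}$ --- are both correct and are not addressed in the paper.
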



\section{Hinge graphs with the same base shapes}\label{sec:samebase}

In this section, we prove several theorems on the behavior of hinge graphs when all base shapes are identical.

\begin{theorem}\label{thm:order}
  Given a hinge graph $\mathcal{H}_{k,n}$, the order of $K(\mathcal{H}_{k,n})$ is $$|K(\mathcal{H}_{k,n})|=(k-1)^{n-2}(k-1)(k+n-1).$$ 
\end{theorem}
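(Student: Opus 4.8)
The plan is to compute the number of spanning trees of $\mathcal{H}_{k,n}$ directly, invoking Theorem \ref{thm:trees} which identifies $|K(\mathcal{H}_{k,n})|$ with the spanning tree count $S(n)$. A hinge graph $\mathcal{H}_{k,n}$ consists of $n$ copies of a $k$-cycle all sharing one common edge $\overline{xy}$ and its two endpoints. Each base cycle therefore contributes a path of $k-1$ edges (the $k-2$ interior vertices plus the two arcs to $x$ and $y$) joining $x$ to $y$, and in addition there is the single shared edge $\overline{xy}$ itself. So $\mathcal{H}_{k,n}$ is exactly the multigraph on two ``terminal'' vertices $x,y$ with one direct edge and $n$ internally-disjoint paths of length $k-1$ between them, realized as a simple graph.

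First I would set up a deletion–contraction or a direct counting argument for $S(n)$. The cleanest route is: a spanning tree of $\mathcal{H}_{k,n}$ must, within each of the $n$ length-$(k-1)$ paths, either include all $k-1$ edges of that path or omit exactly one of them (omitting two or more would disconnect an interior vertex; including all while the rest of the tree still connects $x$ to $y$ would create a cycle unless this path is the unique $x$–$y$ link). Classify spanning trees by how many of the $n+1$ ``$x$–$y$ connectors'' (the shared edge counts as a connector of length $1$) are left ``intact.'' Exactly one connector must be broken-but-wait — more precisely, a spanning tree is obtained by choosing which single connector provides the unique $x$–$y$ route in the tree and, on each of the remaining $n$ connectors, choosing one edge to delete. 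If the chosen intact connector is one of the $n$ long paths, the other $n-1$ long paths each lose one of their $k-1$ edges and the shared edge is deleted: that gives $n\cdot(k-1)^{n-1}$ trees. If the chosen intact connector is the shared edge itself, each of the $n$ long paths loses one of its $k-1$ edges: that gives $(k-1)^n$ trees. Hence
\[
S(n) = n(k-1)^{n-1} + (k-1)^n = (k-1)^{n-1}\bigl(n + (k-1)\bigr) = (k-1)^{n-1}(k+n-1),
\]
which is exactly $(k-1)^{n-2}(k-1)(k+n-1)$, the claimed formula.

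The main obstacle, and the step requiring the most care, is justifying rigorously that every spanning tree arises uniquely from this ``one intact connector, one deleted edge per remaining connector'' description — i.e., that the classification is exhaustive and the count has no overlaps. This is where I would be careful: I would argue that contracting each connector's interior shows $\mathcal{H}_{k,n}$ is homotopically a bouquet/theta-type multigraph, that any spanning tree restricted to a single path-connector is a subforest of a path hence misses at most one edge, and that the number of ``fully intact'' connectors in a spanning tree is exactly one (at least one, since $x,y$ must be connected; at most one, else a cycle forms). An alternative, fully mechanical backup is an induction on $n$ using $S(n) = (k-1)S(n-1) + (k-1)^{n-1}$ via deletion–contraction on one fresh base shape — delete the $k-1$ path edges one-at-a-time or contract — with base case $S(1) = k$ (the $k$-cycle with a doubled edge... rather, $S(2)$ computed by hand) to anchor the recursion; solving this linear recurrence yields the same closed form. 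I would present the direct bijective count as the main proof and mention the recursion as a cross-check.
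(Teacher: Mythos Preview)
Your argument is correct. The classification by ``unique intact connector'' is sound: each of the $n$ length-$(k-1)$ paths must retain all but at most one edge (else an interior vertex is isolated), the shared edge is a length-$1$ connector, and a spanning tree contains exactly one intact $x$--$y$ connector (at least one for connectivity, at most one to avoid a cycle). The resulting count $n(k-1)^{n-1}+(k-1)^n=(k-1)^{n-1}(k+n-1)$ is the claimed formula.

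Your route differs from the paper's. The paper argues by induction on $n$: adding an $(n{+}1)$st base shape, it observes that a spanning tree of $\mathcal{H}_{k,n+1}$ either deletes one of the $k-1$ new edges (contributing $(k-1)S(n)$) or deletes the shared edge (contributing $(k-1)^n$), giving the recursion $S(n+1)=(k-1)S(n)+(k-1)^n$, and then verifies algebraically that the closed form satisfies it. This is exactly the recursion you list as a ``backup.'' Your primary bijective count is more direct and avoids the inductive bookkeeping and the external base-case citation the paper uses for $n=2$; the paper's inductive version, on the other hand, generalizes verbatim to the different-base-shapes setting treated later (Theorem~\ref{thm:general_order}), since the recursion step only sees the newly added cycle. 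Either way the content is the same; your presentation simply unwinds the induction into a single global count.
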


\begin{proof}
We proceed by induction on $n$, the number of copies of the base shape.
As the base case, we consider when $n=2$, that is, we have the hinge graph with two cycles.
By Theorem 3.1 in \cite{BeckerGlass} we garner that the critical group of this hinge graph has order $k^2-1$. 

Next, we consider a hinge graph with $n$ copies of the base shape and then add an extra copy, that is, we begin with $\mathcal{H}_{k,n}$ and consider the hinge graph $\mathcal{H}_{k,n+1}$. 
In what follows, we denote the number of spanning trees of $\mathcal{H}_{k,n}$ by $S(n,k)$. 

\begin{figure}[ht]
    \centering
    \includegraphics[scale=.25, trim = 4cm 3.5cm 15cm 3cm, clip]{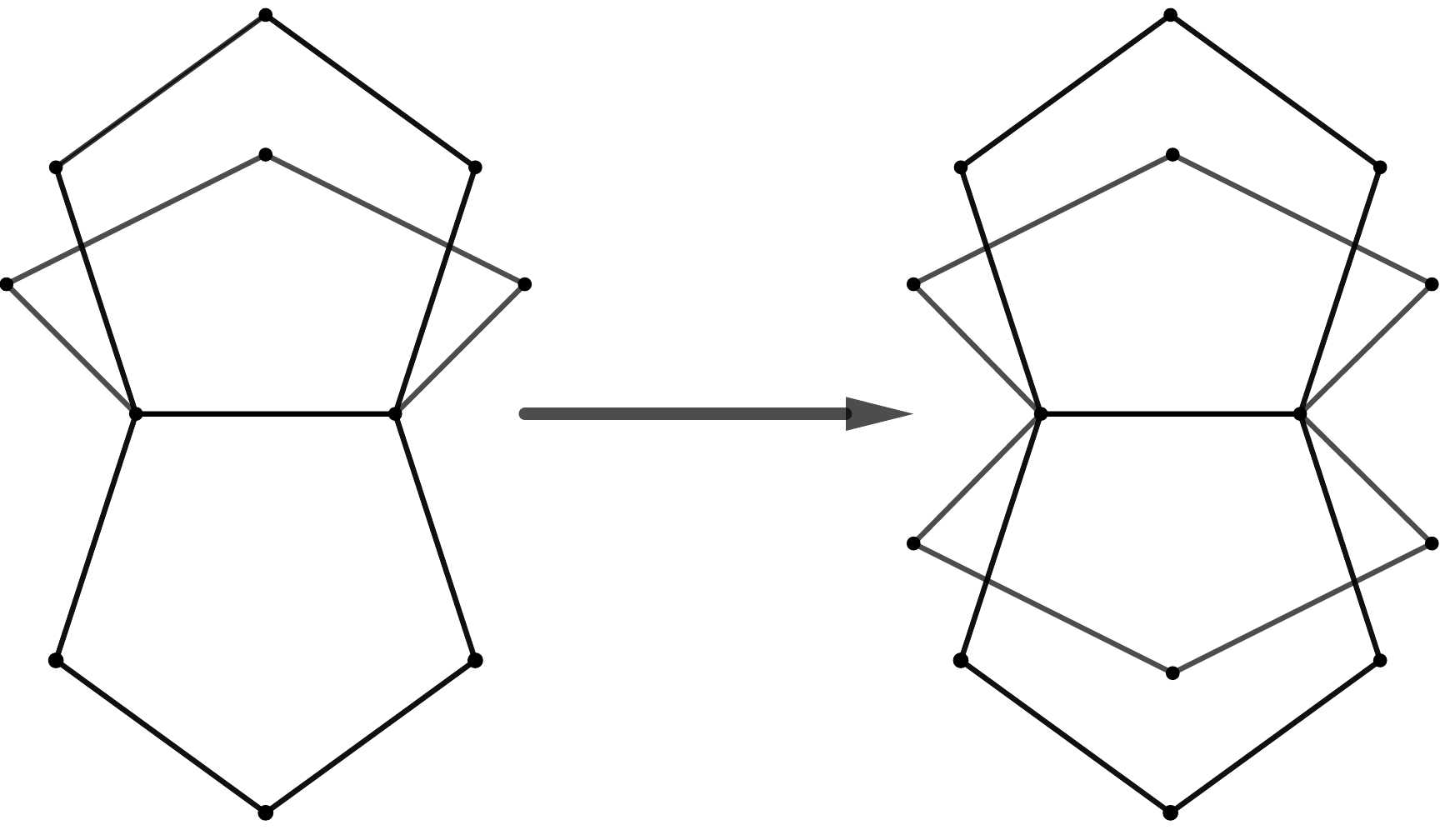}
    \caption{Adding a copy of the base shape to $\mathcal{H}_{5,3}$ to create $\mathcal{H}_{5,4}$.}
    \label{fig:baseshape}
\end{figure}

Whenever another copy of the base shape is added, the number of edges and vertices increases by $k-1$ and $k-2$, respectively.  
Recall that for a spanning tree $T$ of a graph $G$ we have that $|E(T)| = |V(T)|-1$. 
Assuming the condition was previously met, we have the equality 
$$|E(T)|+k-1 = |V(T)|-1+k-2.$$
It then follows that $|E(T)| = |V(T)|-2$,
and therefore for the condition of a spanning tree to be met, we must remove an additional edge.

Note that we cannot remove any two edges on the cycle unless one of them is the shared edge.
This is because subtracting two edges from the same cycle forces the graph to be disconnected for precisely the same reason that removing two edges from a single cycle disconnects the graph (see Figure \ref{fig:theorem3.1}).
On the other hand, the shared edge can be removed since both vertices have valence greater than 2, so neither vertex will be isolated upon its removal.
Thus, there are two possibilities for selecting which edge to remove: one additional edge from the ($n+1$)st base shape or the shared edge. 

\begin{figure}[ht]
    \centering
    \includegraphics[scale=.3, trim = 1.5cm 1.5cm 5.5cm 2.5cm, clip]{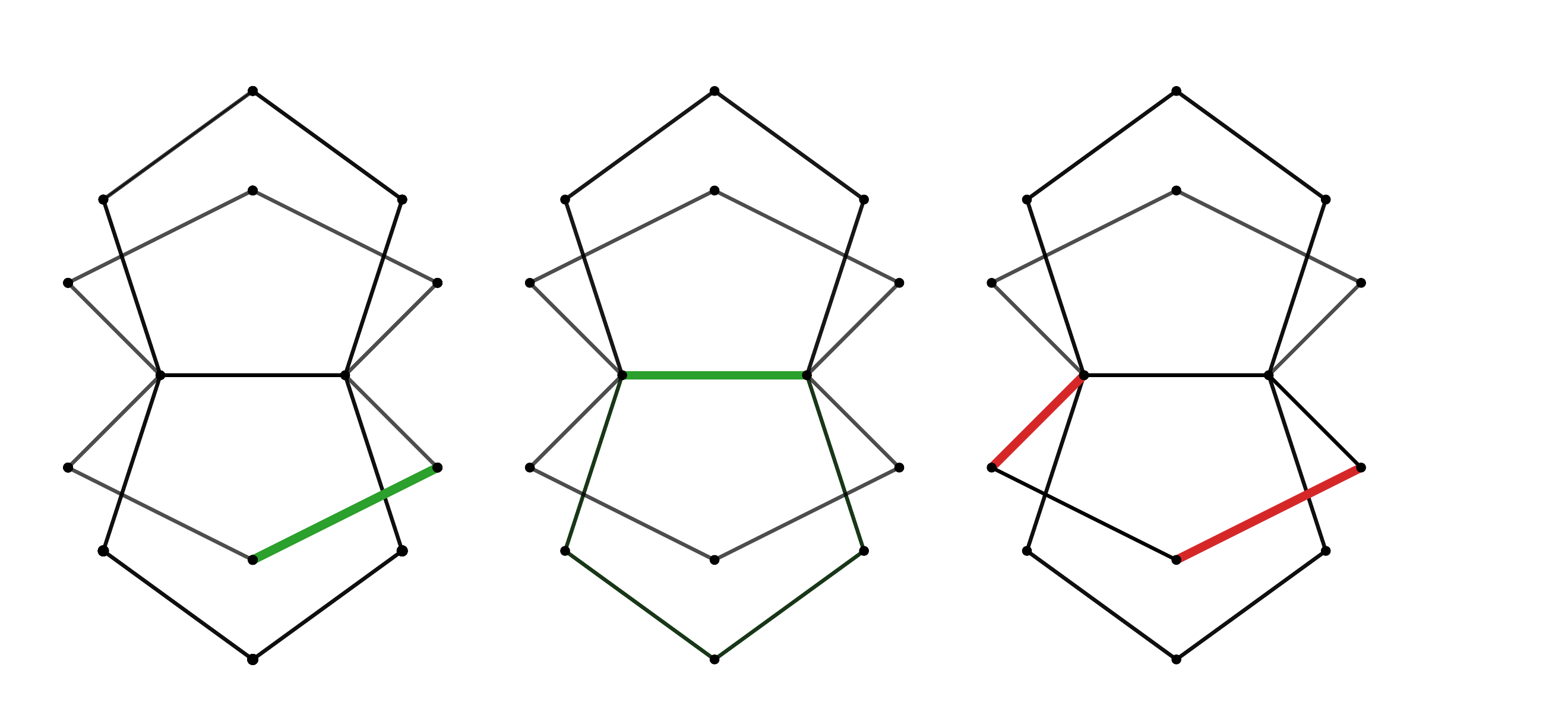}
    \caption{Removing edges to create spanning trees. (Left) removing edges from the new cycle gives us $(k-1)S(n,k)$ possibilities (example of removable edge in green). (Middle) removing the shared edge (in green) gives us $(k-1)^n$ extra possibilities. (Right) we cannot subtract two edges from the same cycle, since that disconnects the graph (example of two non-removable edges in red).}
    \label{fig:theorem3.1}
\end{figure}

We have $k-1$ possible edges to remove from the new ($n+1$)st cycle, and with each option we have $S(n,k)$ spanning trees since we can remove edges in precisely the same way as with $n$ base shapes.
Hence, we obtain $(k-1)(S(n,k))$ spanning trees.

The other option is to the subtract from the shared edge, leaving us to choose one edge from each of the $n$ original cycles. 
Therefore, we have an additional $(k-1)^n$ possible ways to remove the $n$ edges.

To prove that $|\mathcal{H}_{k,n+1}|$ is obtained by the desired formula, we show algebraic equivalence.
This is sufficient as, by Theorem \ref{thm:trees}, the order of the critical group $|\mathcal{H}_{k,n+1}|$ is equivalent to $S(n+1,k)$, the number of spanning trees of $\mathcal{H}_{k,n+1}$.
We proceed as follows:
first, we write down the formula for $n$ copies, apply the operations described above, and then show this is algebraically equivalent to the formula for $n+1$ copies: 
\begin{align*}
(k-1)^{n-2}(k^2-1+(n-2)(k-1))&=(k-1)^{n-2}(k-1)(k+1+n-2)\\
&=(k-1)^{n-2}(k-1)(k+n-1).\\
\end{align*}
Now we apply the recursive operation:
\begin{align*}
(k-1) \left((k-1)^{n-2}(k-1)(k+n-1)\right)+(k-1)^n&=(k-1)^{n-1}((k-1)(k+n-1)+k-1)\\
&=(k-1)^{n-1}(k^2-1+(n-1)(k-1)). \\
\end{align*}
This is the formula one obtains by replacing $n$ with $n+1$ in the original. 
Thus, by Theorem \ref{thm:trees}, we have proved the order of the critical group.
\end{proof}

\begin{remark}
In Theorem \ref{thm:general_order} we generalize Theorem \ref{thm:order} in the setting where our hinge graph has base shapes with different number of vertices, i.e., different cycles. 
\end{remark}
\begin{proposition}\label{lemma:order_of_divisors}
The orders of the divisors $\eta_{x,y}$, $\delta_{x,y}$, and $\epsilon_{x,y}$ are $k-1$, $k+n-1$, and $(k-1)(k+n-1)$, respectively.
\end{proposition}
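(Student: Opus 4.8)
The plan is to use the explicit Laplacian‑augmentation machinery set up in Propositions \ref{thm:obs2a} and \ref{thm:obs2b}. For each of the three divisor types, I would fix a convenient labeling of the vertices of $\mathcal{H}_{k,n}$: call the two shared (hinge) vertices $s_1,s_2$, and on the $i$th cycle label the internal path of $k-2$ vertices $v_{i,1},\dots,v_{i,k-2}$ running from $s_1$ to $s_2$. With this labeling the Laplacian is block‑structured, and firing a contiguous arc of internal vertices on one cycle produces a telescoping effect: most entries cancel and one is left with a simple relation at the two endpoints of the arc. This is exactly what makes the orders come out cleanly, and it is the computational heart of all three cases.

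First I would handle $\eta_{x,y}$, say $\eta$ supported by $+1$ on $v_{1,1}$ and $-1$ on $v_{2,1}$ (two neighbours of the same hinge vertex $s_1$, on different cycles). I would show $(k-1)\eta \sim 0$ by exhibiting an explicit firing vector: fire the internal vertices of cycle $1$ with multiplicities $k-2, k-3,\dots,1$ (a ``ramp'' from the $s_1$ side), fire those of cycle $2$ with the opposite ramp, and fire the hinge vertices and all other cycles $0$ times; the telescoping shows this moves $0$ to $(k-1)\eta$. Minimality then follows from Proposition \ref{thm:obs2b}: row‑reduce $L$ augmented with $m\mathbf{d}_\eta$, observe the recovered firing vector has entries that are integers with gcd $1$ precisely when $m$ is a multiple of $k-1$, and check gcd‑invariance under adding $\mathbf{1}$ (here the argument is easy since the relevant entries already differ by $1$). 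So $|\eta_{x,y}| = k-1$.

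Next, for $\delta_{x,y}$ ($+1$ on $s_1$, $-1$ on $s_2$): this is the divisor to which Theorem \ref{thm:divisors} applies directly, with $\overline{s_1s_2}$ the hinge edge. Deleting it gives the graph $G'$ which is a ``necklace'' of $n$ paths sharing only their endpoints; I would compute $|K(G')|$ via a spanning‑tree count (a spanning tree of $G'$ must break exactly one of the $n$ internal paths at one of its $k-1$ edges... actually one chooses, for all but one cycle, one edge to cut, giving $(k-1)^{n-1}\cdot\binom{n}{1}$ — I would get the exact value and compare) and then invoke the gcd criterion of Theorem \ref{thm:divisors} together with Theorem \ref{thm:order} to pin $[K(\mathcal{H}_{k,n}):\langle\delta\rangle]$ and hence $|\delta_{x,y}|$. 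Alternatively, and more in keeping with the paper's style, I would just do the direct row‑reduction of Proposition \ref{thm:obs2b}: firing every internal vertex of every cycle some common amount against $\delta$ telescopes to show $(k+n-1)\delta \sim 0$ (the $+n$ comes from the $n$ parallel internal paths each contributing $1$ to the balance at $s_1$, plus the edge count at the two hinge vertices), and minimality follows from the gcd‑$1$/gcd‑invariance check, where Lemma \ref{thm:invariantgcd} is exactly the tool that handles invariance under $\mathbf{1}$. So $|\delta_{x,y}| = k+n-1$.

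Finally, for $\epsilon_{x,y}$ ($+1$ on $s_1$, $-1$ on an adjacent internal vertex $v_{1,1}$): the clean route is to observe $\epsilon = \delta + \eta'$ up to linear equivalence for an appropriate choice — more precisely, decompose the $\epsilon$ relation into a ``hinge part'' and a ``single‑cycle part'' so that in $K(\mathcal{H}_{k,n})$ we have $\epsilon_{x,y}$ lying in $\langle\delta_{x,y}\rangle + \langle\eta_{x,y}\rangle$ — and then argue the order is the lcm $(k-1)(k+n-1)$ because $\gcd(k-1,k+n-1)=\gcd(k-1,n)$ need not be $1$, so I cannot simply say lcm equals the product. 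Here is the subtlety and the main obstacle: I would instead prove $|\epsilon_{x,y}| = (k-1)(k+n-1)$ directly by the row‑reduction method, showing that $m\epsilon \sim 0$ forces both $(k-1)\mid m$ (looking at the within‑cycle telescoping relation) and $(k+n-1)\mid m$ (looking at the cross‑cycle relation), these two congruences being ``independent'' because they live in coordinate directions that decouple after row reduction. Getting this independence rigorous — i.e., that the two divisibility obstructions genuinely multiply rather than merely take an lcm — is the step I expect to require the most care, and I would lean on Lemmas \ref{thm:gcd} and \ref{thm:invariantgcd} together with the explicit form of the reduced matrix to nail it down; once that is in place the stated orders follow immediately.
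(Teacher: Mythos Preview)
Your treatment of $\eta_{x,y}$ is essentially the paper's: exhibit the ramp firing vector $k-2,k-3,\dots,1$ on one cycle and its negative on the other, then invoke Proposition~\ref{thm:obs2b}, noting that consecutive integers have $\gcd$ equal to $1$ and that this is stable under adding multiples of~$\mathbf{1}$. The paper phrases this as an induction on $k$ from triangles, but the content is the same ramp you wrote down.

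For $\delta_{x,y}$, your first route via Theorem~\ref{thm:divisors} does not pin the index. Deleting the hinge edge yields $|K(G')|=n(k-1)^{n-1}$, so $\gcd(|K(G)|,|K(G')|)=(k-1)^{n-1}\gcd(k-1,n)$; when $\gcd(k-1,n)>1$ the two divisibility constraints of Theorem~\ref{thm:divisors} leave more than one admissible value of $[K(G):\langle\delta\rangle]$ (e.g.\ $k=5$, $n=2$ allows both $4$ and $8$). Your fallback direct computation is what the paper does, but the firing vector is again a \emph{ramp} $1,2,\dots,k-2$ on every cycle together with $k-1$ on the far hinge vertex---not a common constant as you wrote---and the order is read off as the total received at the unfired hinge vertex.

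The real gap is in $\epsilon_{x,y}$. You correctly flag that $\gcd(k-1,k+n-1)=\gcd(k-1,n)$ need not be $1$, so establishing $(k-1)\mid m$ and $(k+n-1)\mid m$ separately only gives $\operatorname{lcm}(k-1,k+n-1)\mid m$. Your proposed fix---that the two congruences are ``independent because they live in coordinate directions that decouple after row reduction''---is not an argument: divisibility conditions on a single integer $m$ do not multiply merely because they come from different rows, and Lemmas~\ref{thm:invariantgcd} and~\ref{thm:gcd} do not supply the missing step. The paper avoids this entirely. It writes down the explicit firing vector achieving $(k-1)(k+n-1)\epsilon_{x,y}\sim 0$ as the composite of (i) the ramp witnessing $(k-1)\epsilon_{x,y}\sim\delta_{x,y}$, scaled by $k+n-1$, on the cycle carrying $\epsilon_{x,y}$, and (ii) the usual $\delta$-ramp on every cycle. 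The point is that on each cycle \emph{other} than the one supporting $\epsilon_{x,y}$, the entries of this composite firing vector are literally the consecutive integers $1,2,\dots,k-2$. These have $\gcd$ equal to $1$, and since two of them differ by $1$ this $\gcd$ is invariant under adding any multiple of $\mathbf{1}$. Proposition~\ref{thm:obs2b} then gives the order $(k-1)(k+n-1)$ on the nose, with no need to separate and recombine two divisibility obstructions.
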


We prove the orders of the divisors independently.
First, we illustrate a sketch of each proof for readability:

\begin{proof}[Sketch:]\hfill 
\begin{itemize}
  \setlength\topsep{.5em}
  \setlength\itemsep{.5em}
\item To prove the order of $\eta_{x,y}$, we begin with the hinge graph $\mathcal{H}_{3,n}$ and describe an iterative procedure on $k$, the number of vertices of the base shape, to obtain order of $\eta_{x,y}$ for $\mathcal{H}_{k,n}$ for arbitrary $k$ and $n$. 
We consider a cycle as a string of $k$ vertices whose endpoints are the vertices on the shared edge, which we can do because the shared endpoints are not fired.
Each time we add a vertex to increase $k$, we apply a firing operation on all vertices except the endpoints to achieve linear equivalence to the zero divisor.
Concatenating this procedure each time we add a vertex, we see we have fired each non-shared vertex a consecutive number of times and hence this must be the order since any two consecutive numbers are coprime.

\item For $\delta_{x,y}$, we apply the same procedure as $\eta_{x,y}$, but note we start a vertex further away from the endpoint and also need to consider all $n$ base shapes, rather than just a single one.

\item Lastly, the proof of $\epsilon_{x,y}$ combines these two proofs, first showing that a scalar multiple of $\epsilon_{x,y}$ is equivalent to $\delta_{x,y}$ and then using the procedure from $\delta_{x,y}$. 
In this section, all three proofs rely on the consecutive number of firings of vertices. 
\end{itemize}
\end{proof}
\vspace{0.25cm}

\begin{proof}[Proof of order of $\mathbf{\eta_{x,y}}$]
We begin with the smallest case, that is the case where all base shapes are triangles. 
Then we will describe an iterative procedure to obtain all base shapes with more than three vertices.
If we assign a $1$ and $-1$ to the non-shared vertex of two different triangles, it is straightforward to show this divisor is a group element of order at most $2$.

\begin{figure}[ht]
    \centering
    \includegraphics[scale=.25, trim = 5.5cm 1.5cm 8.5cm 3.5cm, clip]{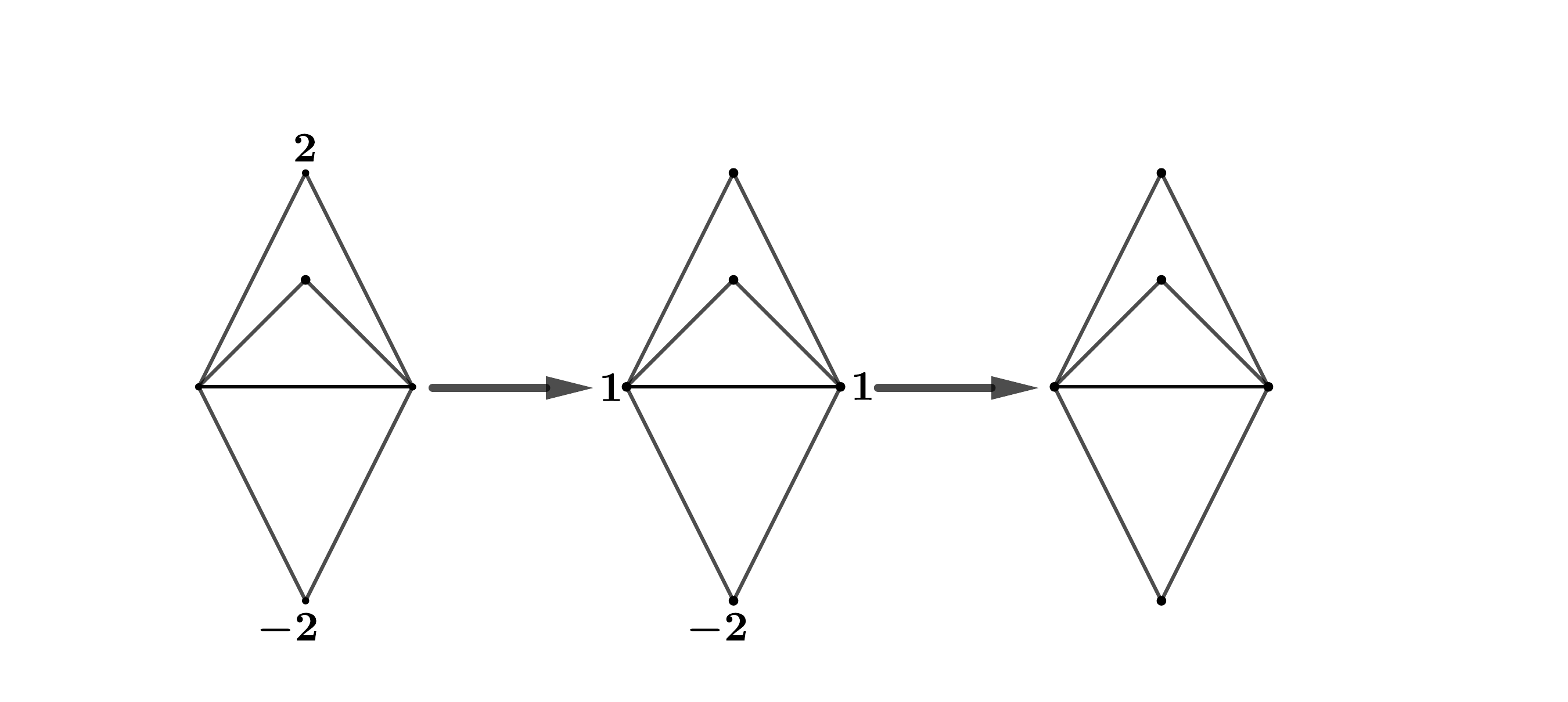}
    \caption{The base case, where linear equivalence can be seen by firing the vertex with a 2 and borrowing from the vertex with -2.}
    \label{fig:Basecase}
\end{figure}

As shown in Figure \ref{fig:Basecase}, we multiply this divisor by $2$ and hence have $2$ and $-2$.
Since we can fire the $2$ exactly once and borrow from the $-2$ exactly once to obtain linear equivalence to the zero divisor, we have shown that the divisor is a group element of at most order $2$.
It is possible that the divisor is linearly equivalent to the zero divisor without any scalar multiplication, which would mean it would be of order $1$. 
One can verify this is not true by utilizing Proposition \ref{thm:obs2b}, which tells us that chip firing equivalence is unique up to addition of the kernel (which in this case is simply $\mathbf{1}$).
From this, if we fire each vertex a number of times, such that the greatest common divisor of the firings is $1$ irrespective of addition of the vector $\mathbf{1}$, then we immediately obtain the order of the divisor. 
This is true because a smaller order, and hence a smaller multiple of each value, would give us fractional firings which cannot be changed by adding integer multiples.
Therefore, we have that for triangles, the order of $\eta_{x,y}$ is $k-1=3-1=2$.

\begin{figure}[ht]
    \centering
    \includegraphics[scale=.3, trim = 3cm 2cm 30cm 2.5cm, clip]{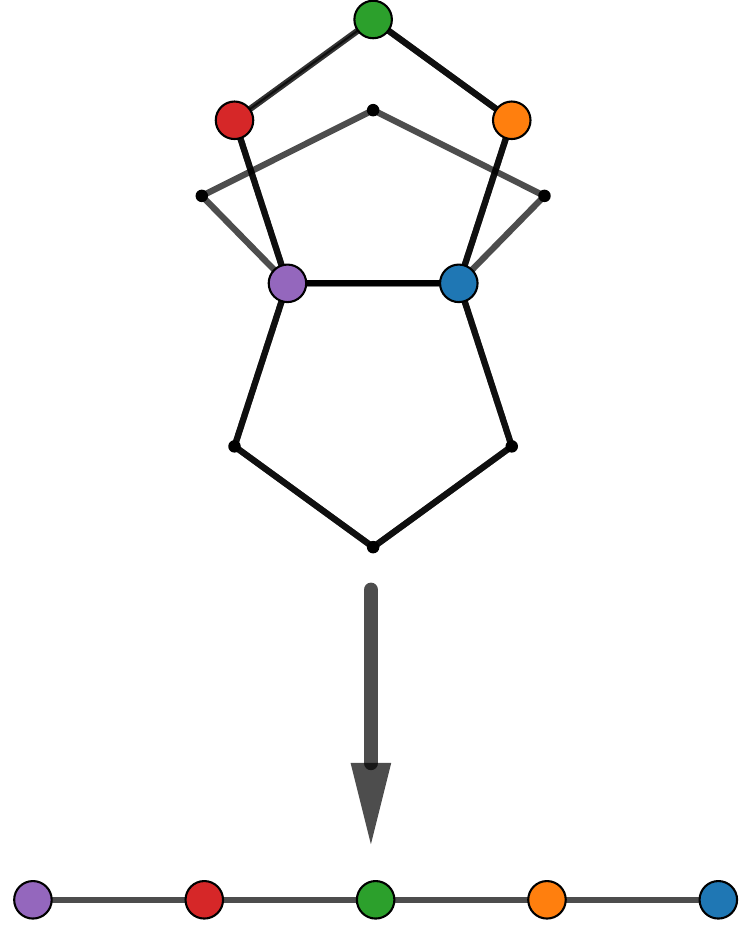}
    \caption{To each vertex of one cycle, we assign a vertex on the string of vertices.}\label{fig:assign}
\end{figure}

Now consider one of the base shapes of $\mathcal{H}_{k,n}$.
From this shape, we associate with it a string of vertices obtained by removing the shared edge as depicted in Figure \ref{fig:assign}.
As shown, the shared vertices correspond with the endpoints of the string. 
Even though they connect via an edge on the base shape, this does not pose a problem as we do not fire either vertex.

Note that each vertex in the string has valence $2$, except for the endpoints, which have valence $1$.
If we fire every vertex except the endpoints, we obtain the value $1$ associated with each endpoint, the value $-1$ associated with the vertices adjacent to each endpoint, and $0$ everywhere else, as depicted in Figure \ref{fig:chain}.
This follows because each time we fire a vertex that is not an endpoint or adjacent to an endpoint, the integer associated with the vertex decreases by two, but firing both of its neighbors returns the two chips lost for a net effect of zero.

\begin{figure}[ht]
    \centering
    \includegraphics[scale=.4, trim = .5cm 6cm 26cm 6cm, clip]{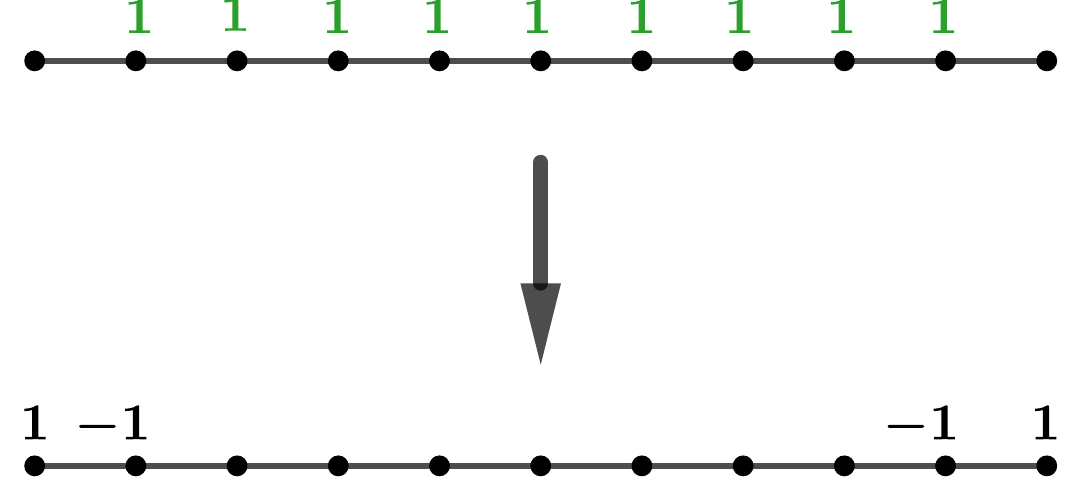}
    \caption{When firing all vertices in a chain except the endpoints exactly once, as indicated by the green numbers, we have that the amount each vertex changes by is $(1, -1, 0,\dots,0, -1, 1)$.}
    \label{fig:chain}
\end{figure}

The result is twofold: given an existing configuration of $1$s associated with endpoints and $-1$s associated with vertices adjacent to them, adding an additional vertex and repeating the process ``pushes" the $1$ an additional vertex farther. 
Alternatively, if we decide not to add an additional vertex, we must compensate by increasing the value associated with the vertex adjacent to the endpoint by $1$.

Since we are not firing the endpoints, we can apply the aforementioned concept to a cycle, with the endpoints being the vertices of the shared edge, as illustrated in Figure \ref{fig:fireprocess}.

\begin{figure}[ht]
    \centering
    \includegraphics[scale=.3, trim = 1cm 5cm 7cm 4cm, clip]{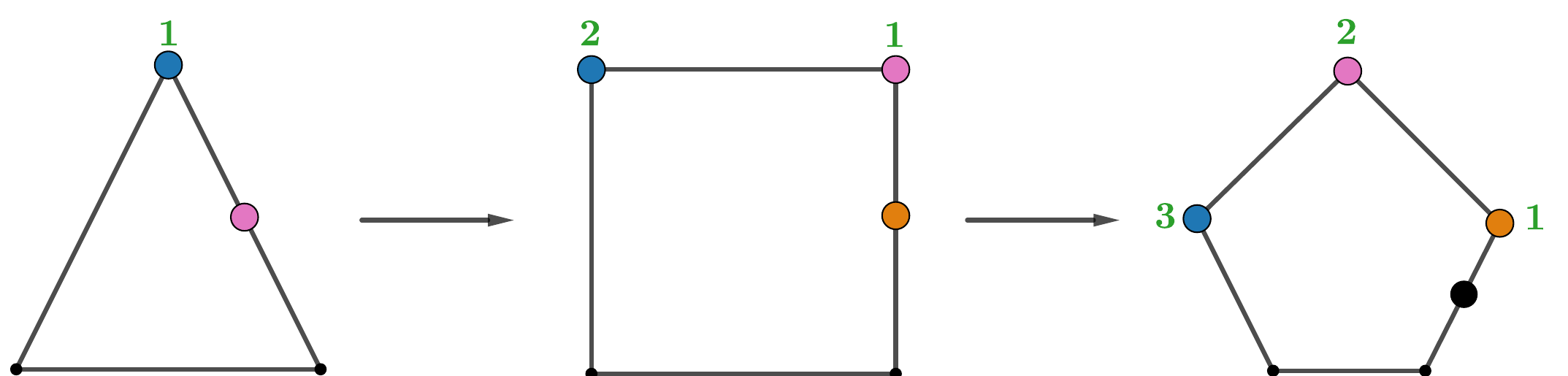}
    \caption{The chip firing process applied iteratively to the triangle base shape. The green numbers represent the number of times each vertex on the graph is fired. This results in the 1 ``pushed" an additional vertex further, as shown in Figure \ref{fig:consecfiring}.  }\label{fig:fireprocess}
\end{figure}

In the cycle configuration, each time we increase $k$ by $1$, we must extend the string of vertices on the right by $1$ vertex, but leave the left unchanged.\footnote{Note that the choice of left and right is irrelevant, but for the sake of consistency we will refer to the nonzero values of the divisor as being situated on the left.}
Each time we introduce a new vertex we must fire every vertex exactly once, except the shared vertices, and also add a $+1$ to the value associated with the vertex adjacent to the left shared vertex. 
For example, enumerating the non-shared vertices of the cycle (from left to right),
\begin{enumerate}
  \setlength\topsep{.5em}
  \setlength\itemsep{.5em}
    \item Triangle: Fire 1 Once.
    \item Square: Fire 1 Twice, Fire 2 Once. 
    \item Pentagon: Fire 1 Thrice, Fire 2 Twice, Fire 3 Once.
\end{enumerate}
And so on. 
Refer to Figure \ref{fig:consecfiring}.\footnote{We will refer to this procedure as the ``consecutive chip-firing process" in similar proofs.}

\begin{figure}[ht]
    \centering
    \includegraphics[scale=.3, trim = 6cm 1cm 21.5cm 0.5cm, clip]{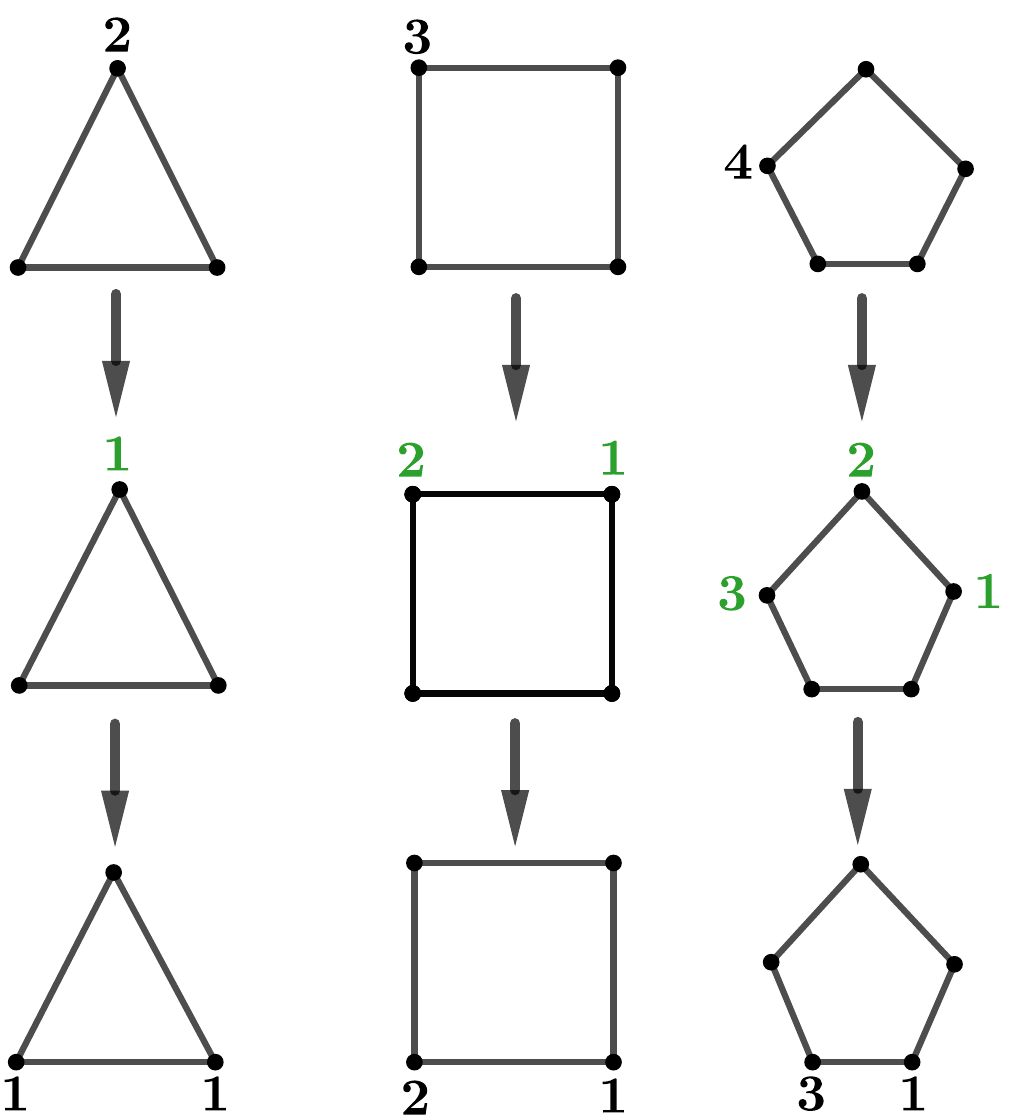}
    \caption{The consecutive chip-firing process applied to the triangle, square, and pentagon base shapes. The top row illustrates the positive component of $\eta_{x,y}$ on the base shapes. In the middle row the green numbers represent the number of times each vertex is fired, a result of subdividing the edge with another vertex and requiring all vertices but those on the shared edge to be fired an additional time. The bottom row represents the divisor resulting from each chip-firing operation.}\label{fig:consecfiring}
\end{figure}

By Theorem \ref{thm:kernel}, we can execute the identical process on the negative integer which will  cancel out the nonzero values on the shared vertices.
Since we begin with $2$ for the triangle, and add $1$ for each additional vertex added, we obtain $k-1$ as the value associated with the vertex adjacent to the left-shared vertex.
For a multiple of $\eta_{x,y}$ to be linearly equivalent to the zero divisor it must be at least $(k-1)\eta_{x,y}$. 
The order must be $k-1$ since $k-1$ and $k-2$ are coprime invariant under $\mathbf{1}$. 
Using Proposition \ref{thm:obs2b}, this completes the proof.
Hence, $\eta_{x,y}$ is an order $k-1$ element of the critical group. 
\end{proof}
\vspace{0.25cm}

\begin{proof}[Proof of order of $\mathbf{\delta_{x,y}}$]\label{pf:order_delta}
The proof for $\delta_{x,y}$ follows an identical procedure as the proof for $\eta_{x,y}$. 
The two major differences, however, are that we need to apply the consecutive chip-firing process $n$ times, once for each base shape, rather than just on a single cycle, and that instead of firing the vertex adjacent to the shared edge, we are firing a vertex on the shared edge. 
This means the order of the divisor is $$(k-1)+1+(n-1) = k+n-1,$$ where the additional $+1$ comes from the shift in position, and the additional $n-1$ comes from this process occurring simultaneously on the $n-1$ extra cycles. 

This also follows from the number of times that vertices adjacent to the negative shared vertex are fired.
Since we want to apply the consecutive chip-firing process to each of the $n$ base shapes, we require each adjacent vertex on the cycles to fire once.
Following the vertices along the base shapes to the other shared vertex, we see that it must be fired $k-1$ times, hence the order is $k+n-1$.
\end{proof}
\vspace{0.25cm}

\begin{proof}[Proof of order of $\mathbf{\epsilon_{x,y}}$]
To prove that the order of $\epsilon_{x,y}$ is $(k-1)(k+n-1)$, we cannot simply show that $(k-1)$ multiplied by each vertex is linearly equivalent to $\delta_{x,y}$, because it is not true for a group $G$, $a,b\in G$, and $c\in \Z$ that  $(c)|a|=|b|$ when $cb=a$.
Instead, we can utilize Theorem \ref{thm:obs2b} and {\cite[Theorem 1]{GlassKaplan}}, which tells us that chip firing equivalence is unique up to addition of the kernel (which is simply $\mathbf{1}$). 
As above in the case of $\eta_{x,y}$, if we can find two vertices with a coprime number of firings, we have deduced the order of $\epsilon_{x,y}$.

Finding the order of $\epsilon_{x,y}$ when the base shapes are all the same first involves firing a multiple of $\epsilon_{x,y}$  to be equivalent to $\delta_{x,y}$.

To show that $(k-1)\eta_{x,y}$ is linearly equivalent to $\delta_{x,y}$ (as in the proof of $\eta_{x,y}$) we can associate the vertices on the cycle with a string of vertices.
Now, we can apply the consecutive chip-firing process to $(k-1)\eta_{x,y}$.
On the left endpoint, we initially have $(k-1)(-1)$, but we fire the adjacent vertex $k-2$ times, hence the left endpoint obtains a value of $-(k-1)+k-2=-1$. 
By firing each successive vertex a consecutive number of times, we ensure that all vertices except the endpoints are $0$ as in the proof of $\eta_{x,y}$, and we are left with a $1$ on the right endpoint.
But this aligns with $\delta_{x,y}$, since the endpoints correspond to the shared vertices. 
See Figure \ref{fig:convert}.

\begin{figure}[ht]
   \centering
   \includegraphics[scale=.25, trim = 2.5cm .5cm 13cm .5cm, clip]{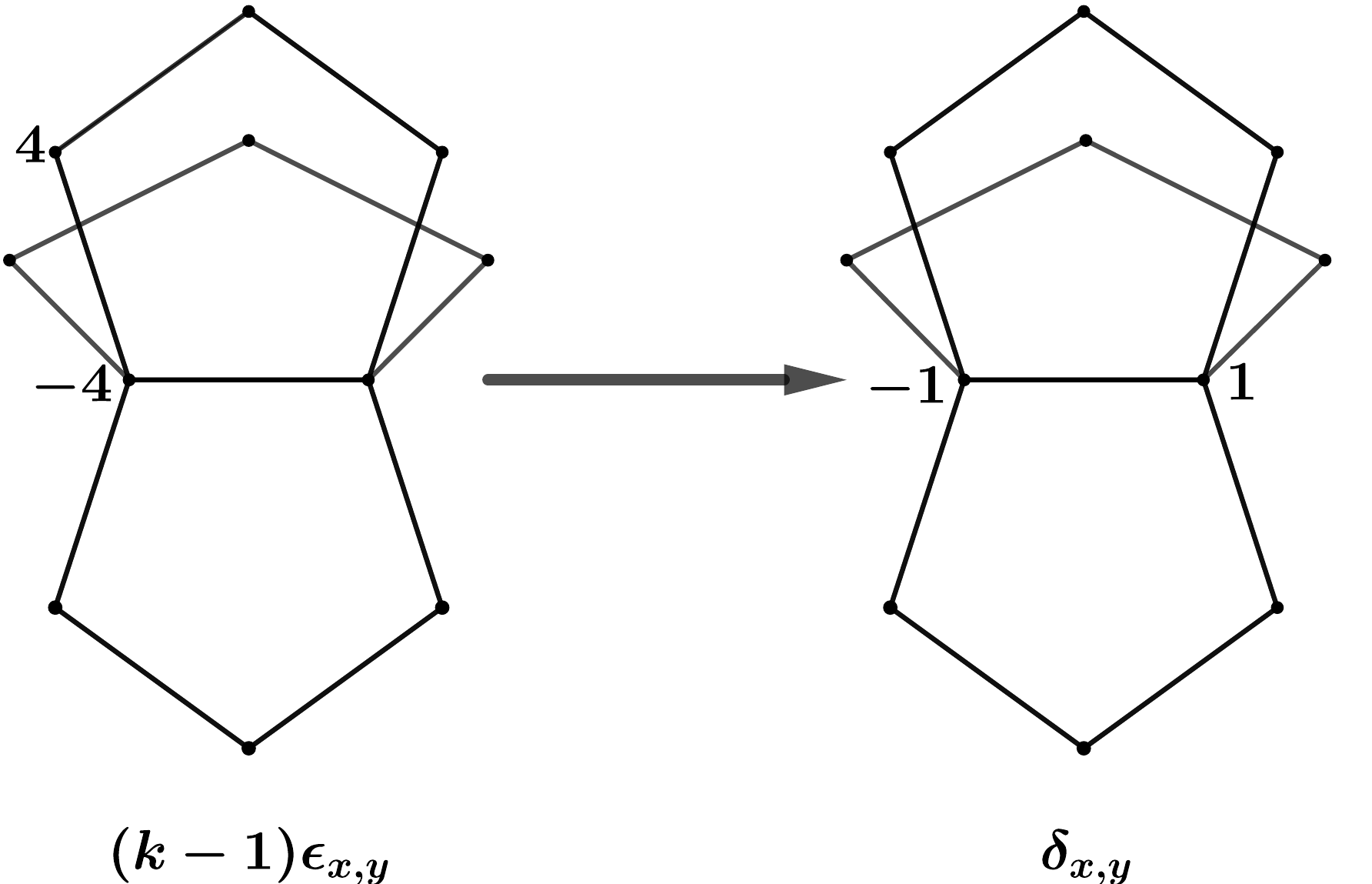}
   \caption{The first step of determining $|\epsilon_{x,y}|$, showing $(k-1)\epsilon_{x,y}$ is linearly equivalent to $\delta_{x,y}$ illustrated on the graph $\mathcal{H}_{5,3}$. Recall that the $4$ becomes $3$ and $1$ on the shared vertices via the chip-firing process, and consequently the only nonzero integers on vertices are $1$ and $-1$.}
   \label{fig:convert}
\end{figure}

Next in order to obtain linear equivalence to a multiple of $\delta_{x,y}$, we can use Theorem $\ref{thm:obs1}$ which tells us that given a chip firing configuration, firing a multiple $m$ of $\mathbf{r}$ (where $\mathbf{r}$ is defined as in the proof of Theorem \ref{thm:obs2a}) is equivalent to repeating the chip-firing process $m$ times, and therefore directly corresponds to multiplying the values associated with each divisor by $m$.  
Instead of utilizing the consecutive chip-firing process as in the proof of $\eta_{x,y}$, we can multiply the entire process by $k+n-1$, which gives us linear equivalence to $(k+n-1)(\delta_{x,y})$ as opposed to simply $\delta_{x,y}$, as shown in Figure \ref{fig:epsilon}.

\begin{figure}[ht]
   \centering
   \includegraphics[scale=.35, trim = 1cm 3cm 3cm 1cm, clip]{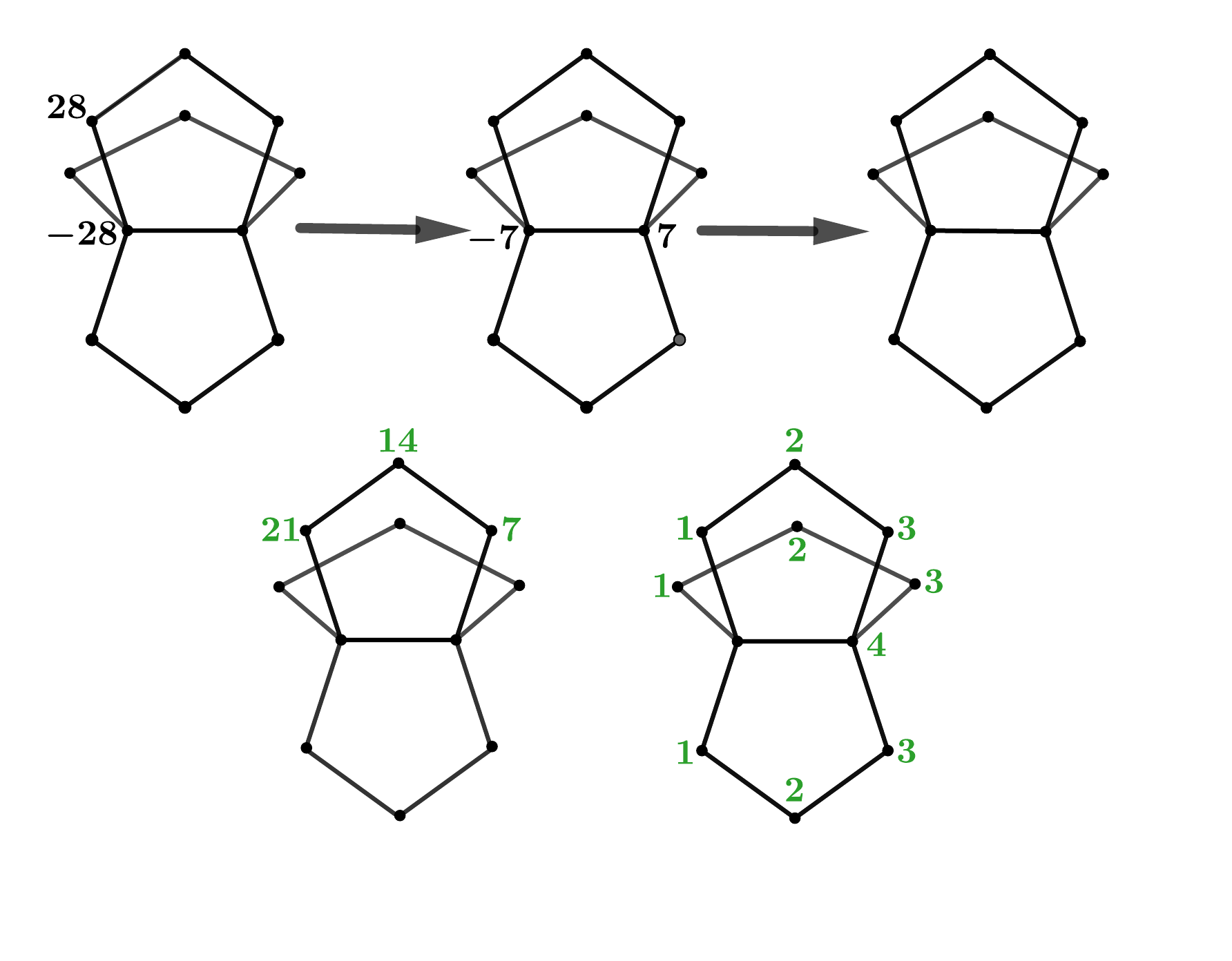}
   \caption{
   Top: $(k-1)(k+n-1)\epsilon_{x,y}$ is linearly equivalent to $(k+n-1)\delta_{x,y}$, which is then linearly equivalent to the zero divisor. Bottom: the green numbers represent the number of times each vertex is fired to obtain equivalence. In the bottom left, these green numbers are $k+n-1 = 7$ times the consecutive chip-firing process, as we have multiplied the divisor by $7$.}
   \label{fig:epsilon}
\end{figure}

Since our hypothesis is that the order is $(k+n-1)$ times as large, instead of firing a consecutive number of times along the cycle, we would fire multiples of $k+n-1$.
This works as we are working under the integers, and hence multiplying the number of firings consequently multiplies the effect of the firings.
But we determined that the order of $\delta_{x,y}$ is $k+n-1$, and hence for any cycle excluding the one initially containing the positive integer of $\epsilon_{x,y}$ we see a consecutive number of firings: $k-2, k-3, \dots, 1.$
Since any two of these are coprime, we have obtained the order, i.e., $\epsilon_{x,y}$ is indeed of order $(k-1)(k+n-1)$.
\end{proof}

We have established the order of a divisor in one factor of the critical group.
To prove the explicit structure of the critical group, we require both a strict lower and strict upper bound on the rank of the group. 
To do this, we observe that the generators of each factor of the critical group $\eta_{x,y,i}$ can be used to construct a strict lower bound for the rank of the group by showing that any linear combination of these divisors is linearly equivalent to the zero divisor if all divisors are multiplied by $0$ or an integral multiple of $k-1$.
Recall that $\eta_{x,y,i}$ denotes the divisor $\eta_{x,y}$ with a $1$ and $-1$ on adjacent cycles in a fixed orientation of the graph. 
It is worth mentioning that this is not a graph invariant; it is a convention we adopt, which will be something to account for in Section \ref{sec:different_bases}.

\begin{lemma}\label{lemma:eta_basis}
The divisors $\eta_{x,y,i}$ with nonzero entries on adjacent base shapes form a linearly independent set over the group $(\Z / (k-1)\Z)^{n-1}$.
\end{lemma}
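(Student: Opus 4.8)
The plan is to show two things simultaneously: that the $n-1$ divisors $\eta_{x,y,i}$ (one for each "extra" cycle relative to a fixed reference cycle, say cycle $n$) generate a copy of $(\Z/(k-1)\Z)^{n-1}$ inside $K(\mathcal{H}_{k,n})$, and that no nontrivial $\Z/(k-1)\Z$-combination of them vanishes. Since Proposition \ref{lemma:order_of_divisors} already tells us each $\eta_{x,y,i}$ has order exactly $k-1$ in $K(\mathcal{H}_{k,n})$, the subgroup they generate is a quotient of $(\Z/(k-1)\Z)^{n-1}$; linear independence is precisely the statement that this quotient is the whole group, i.e. that the natural map $(\Z/(k-1)\Z)^{n-1}\to K(\mathcal{H}_{k,n})$ is injective.

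First I would set up coordinates: fix the two shared (hinge) vertices $x$ and $y$, and for $i=1,\dots,n$ let $v_i$ be the vertex on cycle $i$ adjacent to $x$. Define $\eta_{x,y,i}$ to be the divisor $v_i - v_n$ for $i = 1,\dots,n-1$ (this is exactly an $\eta$-type divisor: a $+1$ and $-1$ on two vertices adjacent to the same shared vertex but on different cycles). Suppose $\sum_{i=1}^{n-1} c_i\,\eta_{x,y,i}$ is linearly equivalent to the zero divisor, with $0 \le c_i < k-1$; the divisor in question is $D = \sum_{i=1}^{n-1} c_i v_i - \left(\sum_{i=1}^{n-1} c_i\right) v_n$. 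I would then invoke Proposition \ref{thm:obs2a}: $D \sim 0$ iff the system $L\mathbf{r} = \mathbf{d}$ has an integer solution $\mathbf{r}$ (unique up to adding $\mathbf{1}$). The key computational step is to solve this system explicitly using the chain/consecutive-firing structure already developed in the proof of Proposition \ref{lemma:order_of_divisors}: on each cycle $i$, firing its interior vertices in the "consecutive" pattern moves a unit charge toward the hinge, and the analysis there shows that to discharge a $+c_i$ at $v_i$ one is forced to fire the hinge-adjacent interior vertex a number of times congruent to something like $c_i$ (times a factor coming from the chain length) — the arithmetic obstruction to integrality is a congruence mod $k-1$ on each $c_i$ separately, forcing $c_i \equiv 0 \pmod{k-1}$, hence $c_i = 0$.

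The main obstacle — and where I'd spend the most care — is making the chain-firing bookkeeping rigorous enough to extract the per-cycle congruence cleanly, because the cycles are not independent: they all share the vertices $x$ and $y$, so a firing that helps discharge one cycle affects the hinge vertices and hence couples to the others. The right way around this is to exploit that $D$ has no support on $x$ or $y$ and that (by Theorem \ref{thm:kernel}) we are free to normalize $\mathbf{r}$ so that, say, $r_x = 0$; then the balance equations at the interior vertices of cycle $i$ involve only $\mathbf{r}$ restricted to that cycle plus the single unknown $r_y$, and one gets $n-1$ decoupled chain-systems sharing the scalar $r_y$. Solving a chain system of length $k-2$ with a prescribed boundary shows the solution has denominator exactly $k-1$ unless the input charge is a multiple of $k-1$; requiring simultaneous integrality across all cycles plus consistency of $r_y$ forces every $c_i \equiv 0$. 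I would present this as: normalize $\mathbf{r}$, write the cycle-$i$ equations, row-reduce the chain block exactly as in Figure \ref{fig:consecfiring}, read off the fractional part, and conclude. Since each $\eta_{x,y,i}$ has order $k-1$ and the $c_i$ range over a full set of residues, linear independence over $(\Z/(k-1)\Z)^{n-1}$ follows, and the count $(k-1)^{n-1}$ matches the size of the subgroup, completing the proof.
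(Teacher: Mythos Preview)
Your approach is correct and genuinely different from the paper's. The paper chooses the $\eta_{x,y,i}$ in ``telescoping'' form (the $-1$ of one divisor sits on the same vertex as the $+1$ of the next), so that any linear combination places on each cycle a single charge of absolute value $<k-1$; it then argues, by appeal to the consecutive-firing picture from Proposition~\ref{lemma:order_of_divisors}, that such a charge cannot be pushed to zero around its cycle. Your proof instead fixes a reference cycle, writes $\eta_{x,y,i}=v_i-v_n$, and attacks the Laplacian system $L\mathbf{r}=D$ directly: with $r_x=0$, each cycle's interior equations give $r_1^{(i)}=\bigl(r_y+(k-2)c_i\bigr)/(k-1)$, and the equation at the hinge vertex $x$ then forces $r_y=0$, whence integrality of $r_1^{(i)}$ yields $c_i\equiv 0\pmod{k-1}$ for every $i$. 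This is more explicit and arguably more rigorous than the paper's combinatorial sketch; the telescoping choice in the paper buys a cleaner picture (each cycle carries a bounded charge), while your linear-algebraic route makes the obstruction a visible denominator.

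One point to sharpen: you write that ``simultaneous integrality across all cycles plus consistency of $r_y$'' forces $c_i\equiv 0$, but the integrality congruences alone give only $c_i\equiv r_y\pmod{k-1}$ on each cycle and hence $n r_y\equiv 0\pmod{k-1}$, which is not enough when $\gcd(n,k-1)>1$. The step that actually pins things down is the \emph{exact} (not merely mod $k-1$) balance equation at $x$, which reads $(k-1+n)r_y=0$ over $\mathbb{Q}$ and forces $r_y=0$; only then do the per-cycle denominators yield $c_i=0$. Make that use of the hinge-vertex equation explicit. (Also, there are $n$ chain systems, not $n-1$; the $n$th one is redundant once the others vanish, but it is present.)
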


\begin{proof}
To show these divisors are linearly independent, we must show that any linear combination $a_1\eta_{x,y,1} + \cdots + a_{n-1}\eta_{x,y,{n-1}}$, for $0 \leq a_1,\dots,a_{n-1} \leq k-2$ on the graph $\mathcal{H}_{k,n}$ is not linearly equivalent to the zero divisor unless all coefficients are zero.

To do this, we necessitate a systematic way of cataloguing our $\eta_{x,y,i}$'s.
In this case, since all the base shapes are identical, we can take all $\eta_{x,y,i}$'s to be adjacent, such that the negative value of one divisor is the positive value of the adjacent divisor. 
See Figure \ref{fig:nonexample} for an example of what can go wrong for arbitrary selection of $\eta_{x,y,i}$.
By this convention, the integers associated to any vertex for some arbitrary linear combination of these divisors is always less than $k-1$ or greater than $-k+1$, since we cannot have linear combinations of two negative or two positive elements. 

As we have seen in Proposition \ref{lemma:order_of_divisors}, $k-1$ is the smallest multiple of any divisor $\eta_{x,y,i}$ where we obtain linear equivalence to the zero divisor. 
In the most general setting, we can think of the linear combination of $\eta_{x,y,i}$'s as one $\eta_{x,y}$ divisor with nonzero entries on every cycle. 
However, the same principle holds for each cycle; there is no firing procedure to obtain linear equivalence to a row of zeroes on each cycle, since the smallest multiple on each cycle was $k-1$.
Thus this linear combination is not equivalent to the zero divisor unless all coefficients are $0$, completing the proof.
\end{proof}

\begin{figure}[ht]
    \centering
    \includegraphics[scale=.25, trim = 3cm 0.5cm 32cm 0.5cm, clip]{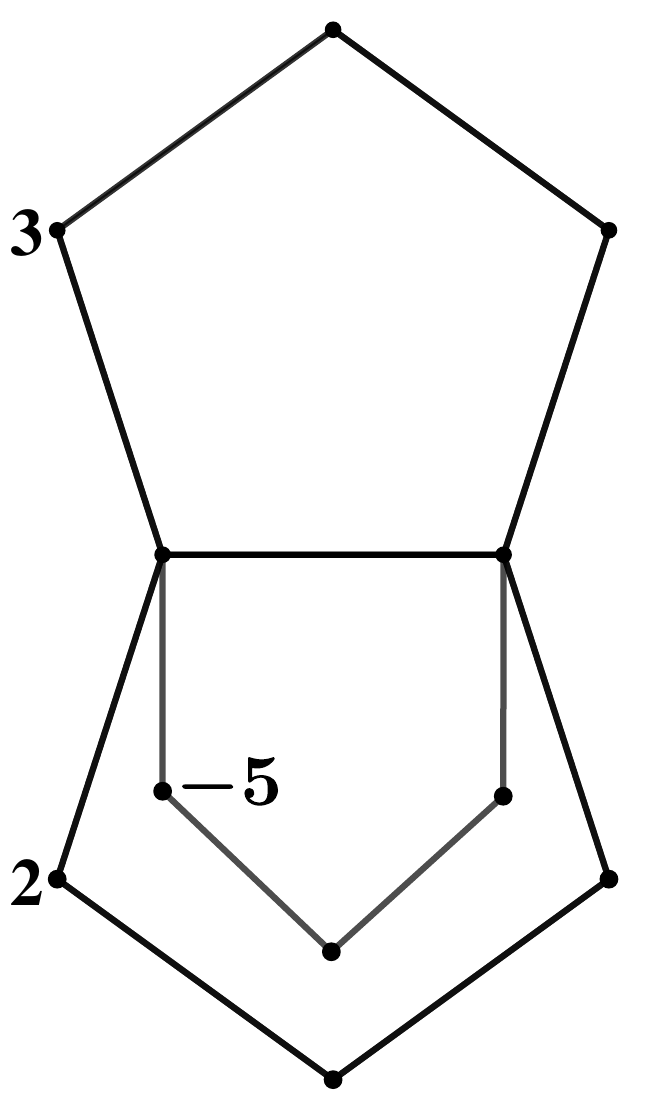}
    \caption{A linear combination of two $\eta_{x,y}$ divisors. This is an example of what could go wrong with linear combinations of arbitrary $\eta_{x,y}$ divisors, since $-5$ is greater in magnitude than $k-1$, so our proof fails and we may no longer have linear independence. We must be careful in general, either selecting adjacent elements for the same base shape, or a generalization for different base shapes.}
    \label{fig:nonexample}
\end{figure}

\begin{corollary}
For $n\geq 3$, the critical group of hinge graphs $K(\mathcal{H}_{k,n})$ is not cyclic.
\end{corollary}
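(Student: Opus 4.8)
The plan is to exhibit an explicit non-cyclic subgroup of $K(\mathcal{H}_{k,n})$. Since a cycle has at least three vertices, the base shape of $\mathcal{H}_{k,n}$ satisfies $k\geq 3$, so $k-1\geq 2$. For $n\geq 3$ we have $n-1\geq 2$, so Lemma \ref{lemma:eta_basis} supplies at least two divisors, say $\eta_{x,y,1}$ and $\eta_{x,y,2}$, that are linearly independent over $\Z/(k-1)\Z$; by Proposition \ref{lemma:order_of_divisors} each has order exactly $k-1$.

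First I would observe that the map $\varphi\colon (\Z/(k-1)\Z)^2 \to K(\mathcal{H}_{k,n})$ given by $(a,b)\mapsto a\,\eta_{x,y,1} + b\,\eta_{x,y,2}$ is a well-defined group homomorphism, since each $\eta_{x,y,i}$ is killed by $k-1$, and that it is injective, since $\ker\varphi = 0$ is precisely the linear independence asserted in Lemma \ref{lemma:eta_basis}. Hence $K(\mathcal{H}_{k,n})$ contains a subgroup isomorphic to $(\Z/(k-1)\Z)^2$.

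Then I would note that $(\Z/(k-1)\Z)^2$ is not cyclic whenever $k-1\geq 2$: it has order $(k-1)^2$, but every element is annihilated by $k-1$, so it has no element of order $(k-1)^2$. Since every subgroup of a cyclic group is cyclic, any group containing a non-cyclic subgroup is itself non-cyclic; therefore $K(\mathcal{H}_{k,n})$ is not cyclic for $n\geq 3$.

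There is essentially no real obstacle here once Lemma \ref{lemma:eta_basis} and Proposition \ref{lemma:order_of_divisors} are in hand; the only points requiring a moment's care are that ``linearly independent over $\Z/(k-1)\Z$'' genuinely yields an embedded copy of $(\Z/(k-1)\Z)^2$ rather than merely two elements of order $k-1$ (two elements of equal order $m$ can still lie in a common cyclic group), and that the degenerate case $k=2$, where the ``base shape'' is not a genuine cycle, is excluded. Alternatively, once Theorem \ref{thm:critical_group} is available the conclusion can be read off directly: for $n\geq 3$ its decomposition contains a $\Z/(k-1)\Z$ summand together with a summand of order $(k-1)(k+n-1)$, and $\gcd(k-1,(k-1)(k+n-1)) = k-1 > 1$, so the group is not cyclic.
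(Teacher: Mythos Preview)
Your proposal is correct and follows essentially the same approach as the paper: the paper's proof is the one-sentence observation that Lemma~\ref{lemma:eta_basis} supplies at least two independent elements when $n\geq 3$, forcing the rank of $K(\mathcal{H}_{k,n})$ to be at least~$2$. Your write-up simply makes this explicit by exhibiting the embedded copy of $(\Z/(k-1)\Z)^2$ and noting it is non-cyclic, and you correctly flag the minor points (that $k\geq 3$ and that independence, not just equal order, is what is needed); the alternative via Theorem~\ref{thm:critical_group} that you mention is not used in the paper since the corollary precedes that theorem.
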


\begin{proof}
 This follows immediately from Lemma \ref{lemma:eta_basis}, since if there are at least $2$ generating elements and consequently $3$ base shapes, $K(\mathcal{H}_{k,n})$ must have rank at least $2$.
\end{proof}

We have established that that any linear combination of the divisors $\eta_{x,y,i}$ is linearly equivalent to the zero divisor if all divisors are multiplied by $0$.
Additionally, since $(k-1)\eta_{x,y,i}$ is linearly equivalent to $0$ for all $i$, any linear combination is also linearly equivalent to $0$.
Because of this, we are able to exhibit a bijection between $\eta_{x,y,i}$ and all of the generating elements of order $k-1$ in the group: $$(1,0,\dots0), (0,1,\dots,0), \dots, (0,0,\dots,(k+n-1)).$$ 
This provides a lower bound for the number of copies of $\Z/(k-1)\Z$; since if we cannot get from any one element of the minimal set to any other via linear equivalence, there must be at least $n-1$ factors in the direct sum. 
Of course, this construction does not provide an upper bound.
Hence, we cannot be sure at this point that this is a minimal generating set.

However, the existence of $\epsilon_{x,y}$, the divisor of order $k^2-1+(n-2)(k-1)$, immediately quells this doubt since a divisor of this order would not exist if there were more copies of $\mathbb{Z}/(k-1)\mathbb{Z}$.
Therefore, we also have an explicit upper bound on the number of factors of the direct sum, and we will have proven the group structure. 
In particular, we have exactly $n-1$ partitions of the group, which identically matches the $n-2$ copies of the smaller factor and the $1$ copy of the large subgroup.

\begin{theorem}\label{thm:critical_group}
The critical group $K(\mathcal{H}_{k,n})$ is isomorphic to 
$$(\Z/(k-1)\Z)^{n-2}\oplus (\Z/(k-1)(k+n-1)\Z).$$
\end{theorem}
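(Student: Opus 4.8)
The plan is to turn the heuristic sketched in the two paragraphs preceding the statement into a clean counting argument with subgroups. First I would collect the three ingredients already in hand, writing $K:=K(\mathcal{H}_{k,n})$. By Theorem~\ref{thm:order}, $|K|=(k-1)^{n-2}(k-1)(k+n-1)=(k-1)^{n-1}(k+n-1)$. By Lemma~\ref{lemma:eta_basis} together with Proposition~\ref{lemma:order_of_divisors}, the map $(\Z/(k-1)\Z)^{n-1}\to K$ sending $(a_1,\dots,a_{n-1})$ to $\sum a_i\eta_{x,y,i}$ is a well-defined injection, so the $\eta$'s generate a subgroup $H\le K$ with $H\cong(\Z/(k-1)\Z)^{n-1}$. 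By Proposition~\ref{lemma:order_of_divisors} again, $\epsilon_{x,y}$ generates a cyclic subgroup $C\cong\Z/((k-1)(k+n-1))\Z$. Finally, $(k-1)(k+n-1)=k^2+nk-2k-n+1$, so the target group is exactly $(\Z/(k-1)\Z)^{n-2}\oplus C$.

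The first real step is to show $K=H+C$ and $|H\cap C|=k-1$. Since $H\cap C$ is a subgroup of the cyclic group $C$ it is cyclic, and since it is a subgroup of $H$ its exponent divides $k-1$; a cyclic group of exponent dividing $k-1$ has order dividing $k-1$, so $|H\cap C|\le k-1$. Conversely $|H+C|=|H|\,|C|/|H\cap C|=(k-1)^{n}(k+n-1)/|H\cap C|$, and as $H+C\le K$ this is at most $|K|=(k-1)^{n-1}(k+n-1)$, forcing $|H\cap C|\ge k-1$. Hence $|H\cap C|=k-1$ and $H+C=K$.

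The second step is to split the copy of $H\cap C$ off from $H$. Let $z$ generate $H\cap C$; inside the free $\Z/(k-1)\Z$-module $H\cong(\Z/(k-1)\Z)^{n-1}$ the element $z$ has order exactly $k-1$, which is equivalent to the coordinates of $z$ in any basis, together with $k-1$, having gcd $1$. Such a unimodular vector extends to a basis of $(\Z/(k-1)\Z)^{n-1}$; this is the one point that is not mere bookkeeping, and I would justify it by splitting $\Z/(k-1)\Z$ into its prime-power factors via the CRT (over each $\Z/p^{e}\Z$ a vector with a unit coordinate obviously extends to an invertible matrix) or via the Smith normal form of the $1\times(n-1)$ coordinate matrix. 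Consequently $H=(H\cap C)\oplus H'$ with $H'\cong(\Z/(k-1)\Z)^{n-2}$. Since $H\cap C\subseteq C$ we then get $K=H+C=(H\cap C)+H'+C=H'+C$, and $H'\cap C\subseteq H'\cap(H\cap C)=0$, so this sum is direct. Comparing orders, $|H'\oplus C|=(k-1)^{n-2}\cdot(k-1)(k+n-1)=|K|$, whence $K=H'\oplus C\cong(\Z/(k-1)\Z)^{n-2}\oplus\Z/((k-1)(k+n-1))\Z$, which is the claimed group by the identity above.

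The only genuine obstacle is the module-splitting fact in the third paragraph — that a cyclic submodule of order equal to the exponent of a free $\Z/(k-1)\Z$-module is a free direct summand — which deserves a careful statement since $\Z/(k-1)\Z$ is not a field; everything else is counting subgroup orders using precisely Theorem~\ref{thm:order}, the $\eta$-subgroup, and the order of $\epsilon_{x,y}$. (An alternative route is an invariant-factor argument: the $\eta$'s force at least $n-1$ invariant factors, $\epsilon_{x,y}$ forces the exponent to be divisible by $(k-1)(k+n-1)$, and a short $p$-adic valuation count against $|K|$ caps the number of invariant factors at $n-1$ and then pins them down as $k-1,\dots,k-1,(k-1)(k+n-1)$; but the subgroup version above is shorter.)
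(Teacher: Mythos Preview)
Your argument is correct. The counting in the first ``real step'' is clean, and the splitting in the second step is justified exactly as you say: over $\Z/(k-1)\Z$ a vector of order $k-1$ is unimodular, and unimodular rows over a finite product of local rings complete to invertible matrices (your CRT/SNF remark handles this). The final direct-sum computation $K=H'\oplus C$ is then just bookkeeping.

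Your route is genuinely different from the paper's. The paper argues via invariant factors: the $\eta_{x,y,i}$ force the rank (number of invariant factors) to be at least $n-1$, the element $\epsilon_{x,y}$ forces the largest invariant factor to be a multiple of $(k-1)(k+n-1)$, and the known order $(k-1)^{n-1}(k+n-1)$ then leaves no room, so the invariant factors must be $k-1,\dots,k-1,(k-1)(k+n-1)$. This is exactly the alternative you sketch in your closing parenthetical. Your primary argument instead builds the decomposition internally by exhibiting $K$ as $H'\oplus C$; the price is the module-splitting lemma you flag, but the payoff is that you never have to reason prime-by-prime or appeal to the uniqueness of invariant factors, and the proof reads as a direct construction of the isomorphism rather than an exclusion of all other possibilities. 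Both approaches consume the same three inputs (Theorem~\ref{thm:order}, Lemma~\ref{lemma:eta_basis}, and Proposition~\ref{lemma:order_of_divisors}).
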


\begin{proof}
Considering $\eta_{x,y,i}$ as a minimal generating set, coupled with the order of $\epsilon_{x,y}$, allows us to obtain the exact structure of the critical group for hinge graphs.
We have found an element of order $(k-1)(k+n-1)$, and a minimal generating set of $n-1$ elements of order $k-1$. 
Each of these elements are linearly independent as viewed in the group $(\Z / (k-1)\Z)^{n-1}$, meaning that none of these elements are multiples of others.
This tells us that the rank of the critical group, as defined in \cite{GlassKaplan}, is at least $n-1$, since $k-1$ divides both itself and $(k-1)(k+n-1)$ and by the Fundamental Theorem of Finitely Generated Abelian Groups, this uniquely defines the rank of $K(\mathcal{H}_{k,n})$.
Since we proved in Theorem \ref{thm:order} that the order of $\mathcal{H}_{k,n}$ was $(k-1)^{n-1}(k+n-1)$, the rank has to be exactly $n-1$, where one factor is $(k-1)(k+n-1)$.
Thus, there are $n-2$ small factors and the critical group is $(\Z/(k-1)\Z)^{n-2} \oplus \Z/((k-1)(k+n-1))\Z$.
\end{proof}


\section{Hinge graphs with different base shapes}\label{sec:different_bases}

Using similar techniques as in Section \ref{sec:samebase}, we generalize our results to hinge graphs with different base shapes. 
In this section, we use the notation $\mathcal{H}_{k_1-1,k_2-1,\dots, k_n-1}$ for a hinge graph with different cycles, where again $k_i$ refers to the number of vertices of each base shape.  
This notation differs from the case with identical base shapes since we use $k_i-1$ instead of $k_i$ for its usefulness in the proofs of the results.
The proofs of our results delve into casework involving underlying number theory, for which we provide proof outlines when necessary to guide the reader.

\begin{theorem}\label{thm:general_order}
Consider a hinge graph with different base shapes $\mathcal{H}_{k_1-1,\dots,k_n-1}$, the order of\\ $K(\mathcal{H}_{k_1-1,\dots,k_n-1})$ is
\begin{equation}\label{eq:order}
|K(\mathcal{H}_{k_1-1,\dots,k_n-1})| = a + a/(k_1-1) + \dots + a/(k_n-1),
\end{equation}
where $a:=(k_1-1)\cdots(k_n-1)$.
\end{theorem}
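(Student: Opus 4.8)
<br>

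\textbf{Proof proposal.} By Theorem~\ref{thm:trees} it suffices to count spanning trees, so write $N(k_1-1,\dots,k_n-1)$ for the number of spanning trees of $\mathcal{H}_{k_1-1,\dots,k_n-1}$ and $a := (k_1-1)\cdots(k_n-1)$; the claim becomes $N(k_1-1,\dots,k_n-1) = a + \sum_{i=1}^n a/(k_i-1)$. First I would record the structural picture used implicitly in Theorem~\ref{thm:order}: $\mathcal{H}_{k_1-1,\dots,k_n-1}$ consists of the two shared (``hub'') vertices $x,y$, the single shared edge $\overline{xy}$, and $n$ internally disjoint paths from $x$ to $y$, the $i$-th having $k_i-1$ edges (the non-shared arc of the $i$-th base cycle). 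Adjoining a copy of a $k_{n+1}$-cycle just adds one more such path $P$, of length $k_{n+1}-1$, and raises the cycle rank by one, so a spanning tree of the enlarged graph must omit exactly one more edge than a spanning tree of $\mathcal{H}_{k_1-1,\dots,k_n-1}$.

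The core is a recursion proved exactly in the style of Theorem~\ref{thm:order}: I would split the spanning trees $T$ of $\mathcal{H}_{k_1-1,\dots,k_{n+1}-1}$ according to how they meet the new path $P$. If $T \supseteq P$, then $x$ and $y$ are already joined through $P$, so $\overline{xy}\notin T$, and—since each old arc $C_i\setminus\overline{xy}$ together with $P$ would form a cycle—$T$ must omit exactly one edge from each of the $n$ old arcs; this identifies such $T$ with spanning trees of the ``$n$ old cycles with $x$ and $y$ identified'', giving $\prod_{i=1}^n(k_i-1)=a$ possibilities. If instead $T$ omits at least one edge of $P$, a count of the $n+1$ omitted edges against the cycle rank of $\mathcal{H}_{k_1-1,\dots,k_n-1}$ forces $T$ to omit exactly one edge $f$ of $P$; with $f$ deleted the internal part of the new cycle becomes two dead-end pendant paths, so no $T$-path between two vertices of $\mathcal{H}_{k_1-1,\dots,k_n-1}$ uses $P$, and $T$ restricted to $\mathcal{H}_{k_1-1,\dots,k_n-1}$ is a genuine spanning tree; conversely any spanning tree of $\mathcal{H}_{k_1-1,\dots,k_n-1}$ together with a choice of $f$ reassembles such a $T$. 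Hence
\[
N(k_1-1,\dots,k_{n+1}-1) = (k_{n+1}-1)\,N(k_1-1,\dots,k_n-1) + \textstyle\prod_{i=1}^n(k_i-1).
\]

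To finish, induct on $n$. The base case is immediate: for $n=1$, $\mathcal{H}_{k_1-1}$ is the cycle $C_{k_1}$ with $k_1 = (k_1-1)+1$ spanning trees, matching the formula; alternatively one may start at $n=2$ using Theorem~3.1 of~\cite{BeckerGlass}. For the step, substitute the inductive hypothesis into the recursion and use $a_{n+1}=(k_{n+1}-1)a_n$, where $a_n=\prod_{i=1}^n(k_i-1)$:
\[
(k_{n+1}-1)\!\left(a_n+\sum_{i=1}^n\frac{a_n}{k_i-1}\right)+a_n = a_{n+1}+\sum_{i=1}^n\frac{a_{n+1}}{k_i-1}+\frac{a_{n+1}}{k_{n+1}-1},
\]
which is precisely \eqref{eq:order} for $n+1$. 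Together with Theorem~\ref{thm:trees} this proves Theorem~\ref{thm:general_order}.

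I expect the main obstacle to be the bookkeeping in the second case of the recursion: checking that a spanning tree omitting a non-shared edge of the new cycle omits \emph{exactly} one such edge, and that deleting one non-shared edge of the new cycle neither disconnects the graph nor leaves a cycle, so that the restriction to the old hinge graph is truly a spanning tree. Everything else is routine algebra. One could also bypass the induction entirely by proving once and for all that a generalized theta graph with path lengths $\ell_1,\dots,\ell_m$ has $\sum_{i=1}^m\prod_{j\neq i}\ell_j$ spanning trees (keep one path intact, delete one edge from each of the others) and specializing to $(\ell_1,\dots,\ell_{n+1})=(k_1-1,\dots,k_n-1,1)$—but the same exhaustiveness/disjointness point is the crux there as well.
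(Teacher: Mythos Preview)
Your proposal is correct and follows essentially the same approach as the paper: both count spanning trees via Theorem~\ref{thm:trees}, establish the recursion
\[
N(k_1-1,\dots,k_{n+1}-1)=(k_{n+1}-1)\,N(k_1-1,\dots,k_n-1)+\prod_{i=1}^n(k_i-1)
\]
by the same ``new-arc broken vs.\ new-arc intact'' dichotomy used in Theorem~\ref{thm:order}, and then verify the closed form by induction on $n$. The paper simply asserts that the argument of Theorem~\ref{thm:order} carries over with $(k-1)^n$ replaced by $\prod_i(k_i-1)$ and checks the algebra; your write-up makes the spanning-tree bijection in each case explicit, which is exactly the ``bookkeeping'' you flagged as the main obstacle, but the underlying strategy is identical.
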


\begin{proof}
Due to Theorem \ref{thm:trees}, the order of the critical group $|K(\mathcal{H}_{k_1-1,\dots,k_n-1})|$ is equal to the number of spanning trees of the graph.
Therefore, it is enough to follow a similar iterative strategy to the one described in the proof of Theorem \ref{thm:order}, to determine the number of spanning trees once a new base shape is added.
This is because the iterative strategy does not rely on the graph being composed of identical base shapes; the only point to consider is that instead of $(k-1)^n$, we now have $(k_1-1),\dots, (k_{n-1}-1)$.
Therefore, showing algebraic equivalence to the iterative strategy is sufficient.

We proceed by induction.
The base cases for one and two base shapes are shown below. 
On the left-hand side is the iterative approach, and the right is the closed form:
\begin{align*}
k_1 &= (k_1-1)+1, \\
k_1 k_2-1 &= k_1 k_2-k_1-k_2+1+(k_1-1)+(k_2-1)=(k_1-1)(k_2-1)+(k_1-1)+(k_2-1).
\end{align*}
For the inductive step $(n \geq 3)$, we can factor a $k_n-1$ out of every term of the closed form except the last as follows, denoting $(k_1-1)\cdots(k_{n-1}-1)$ as $a_{n-1}$ for simplicity:
\begin{align*}
    (k_n-1)(a_{n-1}+a_{n-1}/(k_1-1) + \cdots + a_{n-1}/(k_{n-1}-1)) + ((k_1-1)\cdots(k_{n-1}-1)).
\end{align*}
The first term corresponds to the number $(k_n-1)(S(n-1,k))$ in the iterative approach, which is the number of spanning trees where one edge is removed from the newly added base shape. 
The second term corresponds to the number of spanning trees with the shared edge removed in the iterative approach.
Since these are the only two possibilities, we have found the number of spanning trees, thus completing the proof.
\end{proof}

\begin{remark}
In \cite{GaudetJensenRanganathan} the authors study a family of graphs which they refer to as $\mathbf{s}$-subdivided banana graphs, these are equivalent to thick cycle graphs. 
They determine the order of the critical group of $\mathbf{s}$-subdivided banana graphs for the special case when they let $\mathbf{s}=(s_1,\dots, s_m)$ be a tuple of positive integers such that $\sum_{i=1}^m\frac{\prod_{j=1}^m s_j}{s_i}=p^r$ and $\gcd(s_i,p)=1$ for fixed prime $p$ and integer $r$  (Proposition 13, \cite{GaudetJensenRanganathan}).  
Their proposition follows as a corollary to Theorem \ref{thm:general_order} when interpreting $\mathbf{s}$-subdivided banana graphs as hinge graphs.
\end{remark}

\begin{proposition}\label{diffdelta}
The order of $\delta_{x,y}$, as defined in Section \ref{sec:prelims}, is
$$|\delta_{x,y}|=b+b/(k_1-1)+b/(k_2-1)+\cdots+b/(k_n-1),$$
where $b:= \lcm(k_1-1, \dots , k_n-1)$. 
\end{proposition}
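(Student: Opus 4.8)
The plan is to run the argument in parallel with the $\delta_{x,y}$ case of Proposition~\ref{lemma:order_of_divisors}, but to replace the uniform ``consecutive firing process'' by the linear-algebra computation underlying Proposition~\ref{thm:obs2b}, which makes the bookkeeping for cycles of different lengths transparent. Write the vertices of $\mathcal{H}_{\{k_1-1,\dots,k_n-1\}}$ as the two shared vertices $x,y$ together with, for each base shape $i$, an internal path $x - v_{i,1} - \cdots - v_{i,k_i-2} - y$, and recall the hinge edge $\overline{xy}$. To compute $|\delta_{x,y}|$ I would look for the smallest $m$ for which $L\mathbf z = m\,\delta_{x,y}$ has an integral solution $\mathbf z$ (the firing vector), normalized so that $\mathbf z$ vanishes on $y$.

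Solving the system: the Laplacian rows at the internal vertices $v_{i,j}$ force $z_{v_{i,j-1}} + z_{v_{i,j+1}} = 2 z_{v_{i,j}}$, so along each path $z_{v_{i,\bullet}}$ is an arithmetic progression running from $z_x$ at the $x$-end to $0$ at the $y$-end; hence $z_{v_{i,j}} = z_x\,(k_i-1-j)/(k_i-1)$. Substituting these into the row at $x$, whose valence is $n+1$, collapses everything to $\bigl(1 + \sum_{i=1}^{n} \tfrac{1}{k_i-1}\bigr) z_x = m$, i.e. $z_x = mb/M$ with $M := b + b/(k_1-1) + \cdots + b/(k_n-1)$. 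Integrality of the whole vector amounts to $(k_i-1)\mid z_x$ for every $i$ (since $\gcd(k_i-1,\, k_i-1-j)=1$ when $j=k_i-2$), i.e. $b\mid z_x$; as $z_x=mb/M$, this holds exactly when $M\mid m$. So the least valid multiple is $m=M$, the asserted order, and the corresponding firing vector has $z_x=b$ and $z_{v_{i,j}} = \tfrac{b}{k_i-1}(k_i-1-j)$; in particular the entries contributed by cycle $i$ are $\tfrac{b}{k_i-1},\tfrac{2b}{k_i-1},\dots,\tfrac{(k_i-2)b}{k_i-1}$. In the chip-firing language of Section~\ref{sec:samebase} this says exactly that the consecutive firing process, carried out simultaneously on all $n$ cycles, requires the vertex adjacent to $y$ on cycle $i$ to be fired $b/(k_i-1)$ times while $x$ is fired $b$ times, for a total of $M$.

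To certify that $M$ really is the order rather than a multiple of it, I would invoke Proposition~\ref{thm:obs2b}: the entries of the firing vector must have $\gcd$ equal to $1$, invariant under adding $\mathbf 1$. The $\gcd$ of all entries equals $\gcd\bigl(b/(k_1-1),\dots,b/(k_n-1)\bigr)$, since each $b/(k_i-1)$ divides $b=z_x$ and occurs among the entries, and this is $1$ by Lemma~\ref{lemma:bgcd}. For invariance under $\mathbf 1$: if a prime $p$ divided every entry of $\mathbf z + t\mathbf 1$ it would divide every pairwise difference of entries of $\mathbf z$, and those differences include $z_x - z_{v_{i,1}} = b/(k_i-1)$ for each $i$, whose $\gcd$ is again $1$ by Lemma~\ref{lemma:bgcd} (one can also phrase this step via Lemma~\ref{thm:invariantgcd} applied to a coprime pair of cycle lengths). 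This completes the identification of the order with $M$.

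I expect the main obstacle to be the ``no smaller multiple works'' half: one must be careful that the integrality constraints from the several cycle lengths genuinely combine to force $b\mid z_x$ rather than merely some divisor of $b$, and then to verify the $\mathbf 1$-invariance of the $\gcd$ cleanly. This is precisely the role reserved for Lemmas~\ref{lemma:bgcd} and~\ref{thm:invariantgcd}, and it is the point at which the hypothesis that $b$ is the \emph{least} common multiple of the $k_i-1$ (equivalently, that the $b/(k_i-1)$ are jointly coprime) is used.
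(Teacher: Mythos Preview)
Your argument is correct and is essentially the same as the paper's: both determine the firing vector that kills $m\delta_{x,y}$ by observing it must be an arithmetic progression along each cycle (scaled so the shared vertex $x$ is fired $b$ times), and then read off $m$ as the total number of chips received by the unfired vertex $y$, namely $b+\sum_i b/(k_i-1)$. The paper phrases this in the chip-firing language of the ``consecutive firing process'' while you solve $L\mathbf z=m\delta_{x,y}$ directly, but the computation is identical; your minimality check via Proposition~\ref{thm:obs2b} and the pairwise differences $b/(k_i-1)$ is a bit more explicit than the paper's appeal to the least common multiple, which is deferred to Remark~\ref{rem} and Lemma~\ref{lemma:bgcd}.
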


\begin{proof}
As in the proof of Proposition \ref{lemma:order_of_divisors} for the order of $\epsilon_{x,y}$, firing a multiple $m$ of $\mathbf{r}$ is equivalent to repeating the chip-firing process $m$ times, and therefore directly corresponds to multiplying the values associated with each divisor by $m$.  
Consequently, given a specific number of vertices, firing adjacent vertices by a multiple of a consecutive number of times still produces a zero on each vertex.
This is because each vertex is connected to two others, and hence when we subtract the same multiple repeatedly we are eliminating that deficit.
Thus, the question of determining $|\delta_{x,y}|$ first involves finding the least common multiple of each $k_i-1$, such that a multiple of the consecutive chip-firing process can be applied to each cycle.
Refer to Figure \ref{figb} for an example of this process applied to the graph $\mathcal{H}_{2,3,4}$. 
Notice that in the figure, one of the shared vertices is never fired.
This is valid as the graph Laplacian $L$ has kernel of dimension $1$, and therefore we can choose a vertex not to fire.
This also directly corresponds to the use of $k_i-1$, as we are excluding one vertex from each cycle when applying the consecutive process a multiple of times.

\begin{figure}[ht]
    \centering
    \includegraphics[scale=.25, trim = 3cm 2cm 31.5cm 2cm, clip]{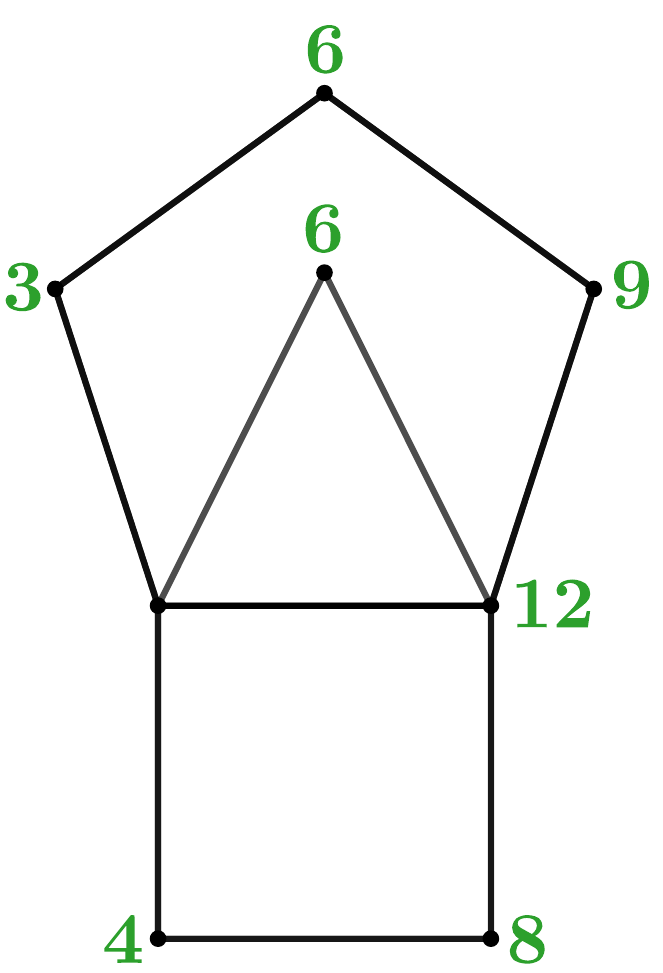}
    \caption{In green are the number of times each vertex is fired, and we can count the ones adjacent to the shared vertex to obtain $|\delta_{x,y}|=25$.}
    \label{figb}
\end{figure}

It remains to be shown that that this is the smallest multiple of $\delta_{x,y}$ linearly equivalent to the zero divisor.
First, by Lemma \ref{lemma:bgcd}, we see that the number of times we fire the vertices on each cycle are coprime.
However, this is not sufficient as established above, since we must also check that adding and subtracting multiples of $\mathbf{1}$ does not affect this.
Hence, we can apply Lemma \ref{thm:invariantgcd} by selecting two base shape whose smallest multiples of firings, i.e., the number of firings on the vertex adjacent to the shared edge, are coprime.
This is guaranteed to occur by Lemma \ref{lemma:bgcd}.
From there, Lemma \ref{thm:invariantgcd} tells us that there must exist two vertices whose number of firings differ by 1, and hence we have found that the number of firings are coprime invariant under addition of $\mathbf{1}$ and can conclude that we have found the order of $\delta_{x,y}$.

Finally we will compute this order.
By considering the number of times we fire all the vertices adjacent to the vertex that was not fired, we can explicitly find $|\delta_{x,y}|$.
From each cycle, the number of times fired to the unfired vertex is $b/(k_i-1)$, where $b:=\lcm(k_1-1,\dots, k_n-1)$, because we are applying $b/(k_i-1)$ multiplied by the consecutive chip-firing process on each cycle.
Then, from the vertex on the shared edge, we will simply have $b$, since this is required for the chip-firing process on each cycle.
Note that the vertex not fired must correspond to a multiple of the $-1$ in $\delta_{x,y}$, since it receives chips from every cycle; likewise the other vertex must correspond to the $+1$.

Hence, $|\delta_{x,y}|$ is $b+b/(k_1-1)+ \dots + b/(k_n-1)$ as required.
\end{proof}

We now turn our attention to the divisor $\epsilon_{x,y}$. 
For different base shapes, however, the particular base shape we choose for $\epsilon_{x,y}$ matters, so we will refer to the divisor as $\epsilon_{x,y,i}$, where the $i$ denotes the base shape on which the vertex corresponding to $1$ resides.
Additionally, there are conditions associated with the order of $\epsilon_{x,y,i}$, namely that we can extract a multiple of $k_i-1$ from the order of the critical group.

\begin{theorem}\label{thm:diffepsilon}
The divisor $\epsilon_{x,y,i}$ on a chosen shape with $k_i$ vertices has order $$(k_i-1)|\delta_{x,y}|,$$ provided there are at least two base shapes with $k_i$ vertices or there exists another base shape with  $t(k_i-1)+1$ vertices for some $t \in \N$.
\end{theorem}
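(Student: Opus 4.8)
The plan is to run, for hinge graphs with different base shapes, the same three-step argument that established the order of $\epsilon_{x,y}$ in the proof of Proposition \ref{lemma:order_of_divisors}, now with $|\delta_{x,y}|$ (computed in Proposition \ref{diffdelta}) playing the role of $k+n-1$ and with the chosen cycle having $k_i$ vertices in place of the uniform length. First I would show that $(k_i-1)\,\epsilon_{x,y}$ is linearly equivalent to $\delta_{x,y}$: since $\epsilon_{x,y}$ is $\delta_{x,y}$ with one chip slid from a shared vertex onto an adjacent vertex of the chosen $k_i$-shape, multiplying by $k_i-1$ and running the consecutive firing process on that one cycle (the conversion of Figure \ref{fig:convert}) pushes the $\pm(k_i-1)$ back onto the hinge and leaves a divisor supported on the shared pair, i.e., $\pm\delta_{x,y}$. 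Since firing $m$ copies of a firing vector multiplies the resulting divisor by $m$ (Proposition \ref{thm:obs1}), applying $|\delta_{x,y}|$ copies of this conversion and then the order-of-$\delta_{x,y}$ process of Proposition \ref{diffdelta} (as in Figure \ref{fig:epsilon}) gives
$$(k_i-1)\,|\delta_{x,y}|\,\epsilon_{x,y}\ \sim\ |\delta_{x,y}|\,\delta_{x,y}\ \sim\ 0,$$
so the order of $\epsilon_{x,y}$ divides $(k_i-1)\,|\delta_{x,y}|$.

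For the matching lower bound I would invoke Proposition \ref{thm:obs2b}: it suffices to write down the firing vector $\mathbf r$ realizing the equivalence above, with integer entries, and check that $\gcd(\mathbf r)=1$ and that this gcd is unchanged by adding $\mathbf 1$ (a legitimate normalization by Theorem \ref{thm:kernel}). Here $\mathbf r$ is the sum of $|\delta_{x,y}|$ copies of the firing vector of the conversion just described, supported on the chosen cycle, and the vector of Proposition \ref{diffdelta}, which on each other cycle of $k_j$ vertices fires the non-shared vertices $b/(k_j-1),\,2b/(k_j-1),\,\dots,\,(k_j-2)b/(k_j-1)$ times and the far shared vertex $b$ times, where $b=\lcm(k_1-1,\dots,k_n-1)$. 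In particular the numbers $b/(k_j-1)$ for $j\neq i$ occur among the entries of $\mathbf r$, and their gcd is $1$ by Lemma \ref{lemma:bgcd}, so $\gcd(\mathbf r)=1$.

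What remains — and this is where the hypothesis is used — is to see that $\gcd(\mathbf r)$ survives translation by $\mathbf 1$. On a single cycle every firing count is a multiple of $b/(k_j-1)$, so, unlike the same-base case in which the counts on an untouched cycle are the consecutive integers $k-2,k-3,\dots,1$ (and hence automatically include two differing by $1$), one must compare counts on two different cycles. The hypothesis (two base shapes with $k_i$ vertices, or one with $t(k_i-1)+1$ vertices) forces $k_i-1$ to divide $\prod_{l\neq i}(k_l-1)$ — in particular $k_i-1\mid |K(\mathcal H)|$, as recorded in Remark \ref{rem}, and in fact, since $|K(\mathcal H)|=(a/b)\,|\delta_{x,y}|$ with $a=\prod_j(k_j-1)$ by Theorem \ref{thm:general_order}, Proposition \ref{diffdelta} and Lemma \ref{thm:gcd}, the stronger statement that $(k_i-1)\,|\delta_{x,y}|$ divides $|K(\mathcal H)|$, so the claimed order is at least consistent with Lagrange. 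More to the point, it is precisely what makes a suitable pair among the reciprocals $b/(k_j-1)$ coprime, so that Lemma \ref{thm:invariantgcd} produces two small multiples of two of these reciprocals — hence two entries of $\mathbf r$, both within the admissible ranges of firing counts — that differ by exactly $1$; two entries differing by $1$ keep gcd $1$ under any constant translation. Both clauses of Proposition \ref{thm:obs2b} then hold, and $\epsilon_{x,y}$ has order exactly $(k_i-1)\,|\delta_{x,y}|$. I expect this last clause to be the main obstacle: making rigorous that the hypothesis forces two firing counts to be consecutive, including writing $\mathbf r$ out explicitly enough to control the ranges when applying Lemma \ref{thm:invariantgcd}. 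The rest transcribes the same-base argument almost verbatim.
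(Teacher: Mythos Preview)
Your proposal is correct and mirrors the paper's own argument almost step for step: convert $(k_i-1)\epsilon_{x,y}$ to $\delta_{x,y}$ via the consecutive firing process, scale by $|\delta_{x,y}|$, then verify the order using Proposition~\ref{thm:obs2b} with coprimality of the $b/(k_j-1)$ (Lemma~\ref{lemma:bgcd}/Remark~\ref{rem}) and translation-invariance via Lemma~\ref{thm:invariantgcd}. One small point of alignment: the hypothesis is needed already for $\gcd(b/(k_j-1):j\neq i)=1$ (Lemma~\ref{lemma:bgcd} only applies once $k_i-1$ divides the lcm of the remaining $k_j-1$), not just for the $\mathbf 1$-invariance step---the paper invokes it at that earlier stage, and your self-identified ``main obstacle'' is exactly where the paper is also terse.
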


\begin{proof}
The proof follows a similar approach as the proof of the order of $\epsilon_{x,y,i}$ in Proposition \ref{lemma:order_of_divisors}, except we must additionally keep track of the number of vertices of each base cycle.
By applying Lemma \ref{lemma:bgcd}, we have that the number of times we fire each vertex adjacent to the shared vertex not fired is coprime.

We want to show that $(k_i-1)\epsilon_{x,y,i}$ is linearly equivalent to $\delta_{x,y}$ and that combining chip-firing operations show the orders are multiplicative.
We consider how this affects the number of times we fire each cycle.

Suppose there exists a base shape $C$ with $|V(C)|=k_i$ such that there exists no other base shape with number of vertices equal to $t(k_i-1)+1$. 
In other words, there are no multiples of $k_i-1$ in $\{(k_1-1), \dots, (k_n-1)\}$.
By Equation (\ref{eq:order}), there will not be a common factor of $k_i-1$ in $\frac{|\mathcal{H}_{\{k_1-1, \dots, k_n-1\}}|}{|\delta_{x,y}|}$, which reduces to $a/b$.
Hence, $\epsilon_{x,y,i}$ will not have order $(k_i-1)|\delta_{x,y}|$ in this case.
Since they do not satisfy the conclusions of the theorem, we will not consider these to be $\epsilon_{x,y,i}$'s in all following proofs.

On the contrary, suppose that there does exist such a cycle. Then $k_i-1$ divides $\lcm(k_1-1,\dots, k_{i-1}-1, k_{i+1}-1,\dots, k_n-1)$.
Thus, we can ignore this cycle and use the consequence in Lemma \ref{lemma:bgcd} on the $n-1$ other base shapes to prove that they will all be fired a coprime number of times.

To make sure this is invariant under addition of the vector $\mathbf{1}$, we once again apply Theorem $\ref{thm:invariantgcd}$.
Since we have shown each will be fired a coprime number of times, the conditions for this theorem are met and there are two vertices for which the number of times fired differs by $1$.
Thus, it is coprime irrespective of addition of the kernel, and we can be certain that we have found the order of $\epsilon_{x,y,i}$.
\end{proof}

\begin{remark}\label{rmk:lcmep}
If there exist multiple $\epsilon_{x,y,i}$ for which $|\epsilon_{x,y,i}|/|\delta_{x,y}|$ are coprime, then the order of the large factor of the critical group will be at least $\lcm(|\epsilon_{x,y,i}|/|\delta_{x,y}|)\cdot |\delta_{x,y}| = \lcm(|\epsilon_{x,y,i}|)$.
This is an important consequence that we will invoke to prove the order of the critical group.
\end{remark}
 
In the following theorem we prove the orders of the minimal generating elements of the critical groups of the hinge graphs.  
Recall that for the hinge graphs with the same base shape, the number of factors in the direct sum of $K(\mathcal{H}_{k,n})$ and their size was determined entirely by the minimal generating set of divisors $\eta_{x,y,i}$.
Furthermore, all of these divisors were of order $k-1$. 
In what follows, we show that such a set of divisors exists when the conditions for the theorem above hold.

In the case where all base shapes have identical numbers of vertices, we could take the minimal generating set to be $\eta_{x,y,i}$, where each $i$ denotes a divisor on adjacent pairs of base shapes. 
A consequence of taking divisors on pairs of cycles is that we require two generating divisors to construct two factors in the structure of $K(\mathcal{H}_{k,n})$, and therefore we must look at adjacent subsets of size three of $\{k_i-1\}$ for $i=1,\dots,n$.
This result is not generalizable, since the order in which each shape appears directly affects the calculated order of elements in the minimal generating set.
For instance, by taking adjacent subsets of size three of the hinge graph $\mathcal{H}_{\{4,4,6,6,12,12\}}$ we obtain a different set of orders of generating elements than that of $\mathcal{H}_{\{4,4,12,12,6,6\}}$, despite the graphs being isomorphic.
The following theorem generalizes the principle of treating collections of size three in cases where the adjacent construction is no longer possible.

\begin{theorem}\label{thm:diffcyclebasis}
A linearly independent set of divisors for the critical group can be constructed as one of the following:
\begin{enumerate}
  \setlength\itemsep{.5em}
\item[(i)] For any collection $s$ of identical base shapes, each with $k_i$ edges, such that $|s| \geq 3$, there exist $|s|-2$ linearly independent divisors of order $k_i-1$ in the critical group.

\item[(ii)] For any pair of base shapes where each base shape has $t_i$ edges, with the property that the greatest common divisor of $t_i-1$ with all other $(k_j-1) \neq (t_i-1)$ is not $1$, there exists a divisor of order $c:=\gcd(k_j-1, t_i-1))$. 
(Note that if there are three or more identical base shapes, each with $t_i$ edges, only one pair with order $t_i-1$ is counted.)

\item[(iii)] For each $k_i-1$ that appears only once in $\{k_1-1,\dots, k_n-1\}$ and satisfies \[\gcd((k_1-1)\cdots(k_{i-1}-1)(k_{i+1}-1)\cdots(k_n-1), \hspace{0.15cm} (k_i-1)) \neq 1,\] there exist divisors of the critical group of order $\gcd(c, \hspace{0.15cm} k_i-1)$, where $c$ is the $\gcd$ of $(k_i-1)$ with 2-tuples of $\{(k_1-1),\dots,(k_{i-1}-1),(k_{i+1}-1),\dots,(k_n-1)\}$. 

\end{enumerate}
Furthermore, in items (ii) and (iii), the critical group obtains a small factor group of at least $\Z/\lcm([c_j])\Z$, where $[c_j]$ denotes the collection of all such $c$ for a particular pair with $t_i$ edges or unique base shape with $k_i$ edges. 
\end{theorem}

Before we begin the proof, we provide an example of each item of Theorem \ref{thm:diffcyclebasis}.
In all following examples, the sequences of numbers given will be $\{k_1-1, \dots, k_n-1\}$.

\begin{example} \hfill
\begin{enumerate}
  \setlength\topsep{.5em}
  \setlength\itemsep{.5em}
    \item[(i)] Take the collection $\{2,2,2,3,5\}$, which corresponds to the hinge graph with three triangles, a square, and a hexagon.
By looking at the collection $s$ of triangles, there must be a factor of $\Z/2\Z$ in the critical group, identical to the outcome of Lemma \ref{lemma:eta_basis}.

\item[(ii)] Take the collection $\{2,2,4,4,5\}$, which corresponds to the hinge graph with two triangles, two pentagons, and a hexagon.
Although this collection does not contain three or more identical copies of the same base shape, we have a pair $t_1=3$ of triangles and a pair $t_2=5$ of pentagons. Since $\gcd(t_1-1, k_3-1) = 2$ and $\gcd(k_1-1, t_2-1)=2$, we obtain two copies of $\Z/2\Z$.
More concretely, this comes from looking at $3$-tuples of the original collection $\{2,2,4\}$ and $\{2,4,4\}$, both of which give us two linearly independent divisors (that are linearly independent from each other by Lemma \ref{lemma:eta_basis}), one which contributes to a unique small factor group and the other which is in the largest factor group.

\item[(iii)]  Take the collection $\{2,2,3,3,12\}$, which corresponds to the hinge graph with two triangles, two squares, and a tridecagon. By looking at the greatest common divisors of pairs of elements in $\{2,2,3,3\}$ with 12, we obtain a copy of $\Z/3\Z$ from $\{3,3,12\}$ and a copy of $\Z/2\Z$ from $\{2,2,12\}$.
Since $\Z/2\Z \times \Z/3\Z \cong \Z/6\Z$, the critical group contains a factor of $\Z/6\Z$. 
\end{enumerate}

\end{example}

\begin{proof}[Proof of Theorem \ref{thm:diffcyclebasis}]

We split the proof into two parts: 
\begin{enumerate}
  \setlength\topsep{.5em}
  \setlength\itemsep{.5em}
  \item[(1)] First, we outline an iterative procedure, starting with the conclusion of Lemma \ref{lemma:eta_basis} to generate small factors of the critical group in the case when not all base shapes are identical.
    \item[(2)] Then, each item of Theorem \ref{thm:diffcyclebasis} is shown to be a consequence of this iterative procedure.
    This is split into three cases when: 
        \begin{enumerate}
        \setlength\itemsep{.5em}
        \item there is a collection of $\geq 3$ identical base shapes,
        \item there is a pair of base shapes with $t_i$ edges for which the greatest common divisor of $t_i-1$ with the other $k_i-1$'s in the collection is not 1.
        This case also splits into three additional cases (detailed in the proof below), and
        \item there is a unique $i$th base shape for which the greatest common divisor of $k_i-1$ with $2$-tuples of $k_j-1$ for $k_i-1 \neq k_j-1$ is not 1.
        \end{enumerate}
\end{enumerate}
\hfill\\
\noindent \textbf{Part (1):} We begin with two identical base shapes that are a subset of the larger hinge graph, and we know $\eta_{x,y}$ is an element of order $k-1$.
To determine the next base shape with the smallest number of vertices such that the order remains $k-1$, we add one vertex in between the nonzero vertex of $\eta_{x,y}$ and the shared edge.
Consequently, the positive vertex of $\eta_{x,y}$ must be fired $k-2$ additional times, and the newly added vertex must be fired $k-2$ times.
This is because with two copies of the same base shape, applying chip-firing operations to ``push" $k-1$ onto the shared edges always results in a $k-2$ and $1$ on the shared vertices, as seen in the two leftmost diagrams of Figure \ref{fig:pic2}. 
Therefore, we fire each vertex in that path $k-2$ more times to ``push" the $k-2$ assigned to the vertex one additional vertex further. 
This procedure is reminiscent of the strategy depicted in Figure $\ref{fig:fireprocess}$, involving utilization of Theorem \ref{thm:obs1}, where the difference is that we multiply the consecutive chip-firing process by $k-2$, and this only applies to the left side of the base shape.

\begin{figure}[ht]
    \centering
    \includegraphics[scale=.30, trim = 6cm 4.5cm 7cm 4cm, clip]{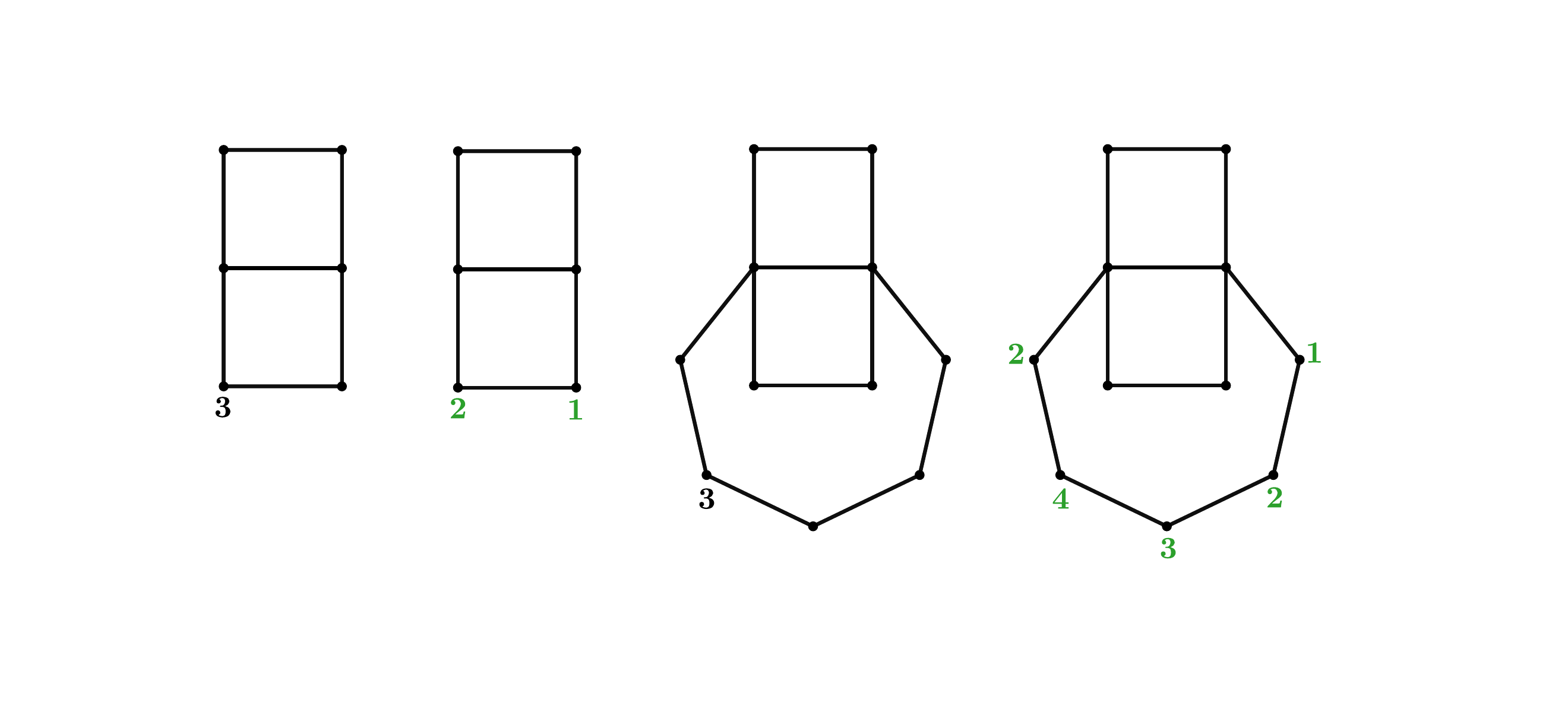}
    \caption{The process of finding the next smallest shape with a divisor of the same order. A vertex is added on the left path, and consequently $k-2$ vertices are added to the right path. The firing procedure can be seen on the right.}
    \label{fig:pic2}
\end{figure}

Now that we have increased the number of times we fire the vertex of $\eta_{x,y}$ by $k-2$, we must still be able to ``push" the $1$ through the base shape onto the other shared vertex.
We must apply the consecutive chip-firing process, where we fire each successive vertex $k-2, k-3,\dots k-k$ times.
Firing the vertex of $\eta_{x,y}$ $k-2$ times, and then firing the vertex between $\eta_{x,y}$ and the shared vertex $k-2$ times results in no net change to the integer associated with this vertex, and this procedure still results in the same configuration. 
However, $k-2$ additional vertices must also be added to the rest of the base shape (e.g., the right path in the rightmost diagram of Figure \ref{fig:pic2}) such that the consecutive chip-firing process can be applied and the $1$ can migrate to the shared edge. 

Note that we have added one vertex on one side of the base shape and consequently, added $k-2$ vertices on the other side of the base shape. 
Thus, the total number of newly introduced vertices is $k-1$.
This is the least number of vertices that can be introduced, since we cannot add fractional numbers of vertices. 
Repeating this process for any number of base shapes an arbitrary amount of times, we see that a base shape with number of vertices $k_i$ is not necessary for a factor of order $k_i-1$. 
(As an example, the hinge graph consisting of two pentagons and a heptagon, $\mathcal{H}_{4,4,6}$, has a factor of order $2$ by applying this procedure to all three base shapes, despite not having any triangle base shapes.)
As in the conclusion of Lemma \ref{lemma:eta_basis}, we require three base shapes in order to have two linearly independent divisors, one of which goes into the large factor of the critical group and the other creates a unique small factor; thus, it is sufficient to look at subsets $\{k_1-1, \dots, k_n-1\}$ of size three.
Then, taking the greatest common divisor of the chosen elements provides a split factor in the critical group of that order. 

\noindent \textbf{Part (2):} Next, we consider the possible outcomes of applying the procedure presented in Part (1).
\begin{enumerate}
\setlength\itemsep{.5em}
\item[(i)] If there exists a collection $s$ containing three or more identical base shapes with $k_i$ edges, the same procedure applies as in Lemma \ref{lemma:eta_basis} and we obtain $|s|-2$ copies of $\Z/(k_i-1)\Z$.

\item[(ii)] For any pair of identical base shapes with number of edges $t_i$ whose $\gcd$ with some $k_j-1$ in the rest of the collection is equal to $c \neq 1$, i.e., $\gcd(k_j-1, t_i-1) = c \neq 1$, apply the procedure from Part (1).
The procedure is applied recursively from a cycle with $c+1$ edges (which may not be present in the hinge graph) to each base shape in the triple (the pair with the added shape) as necessary, and hence we obtain two linearly independent divisors of order $c$. 
Fixing $i$, recall that $[c_j]$ denotes the set of all $c_j$ obtained by taking $\gcd(k_j-1, t_i-1) = c_j \neq 1$, for some $k_j-1 \neq t_i-1$ in the collection $\{k_1-1, \dots k_n-1\}$.
We claim that the divisors of order $c_j$ are linearly dependent for fixed $i$. 
Since our $3$-tuple of $k_i-1$'s includes a pair of identical base shapes, we can guarantee (via adding our divisors together) that there exists a divisor with a $1$ and $-1$ on the pair with number of edges $t_i$ (see Figure \ref{fig:new} for an example).
Thus, all of our divisors from this procedure in this case can be thought of as having a $1$ and $-1$ on our pair of base shapes, except at different vertices.
There are three possibilities:

\begin{enumerate}
  \setlength\itemsep{.5em}
  \item If a divisor of order $c_j$ is coprime to all others constructed from the procedure in Part (1), linear dependence is irrelevant because the Chinese Remainder Theorem guarantees that the order of the group will be at least the product.
    
    \item If the order of one divisor is a multiple of another, then we can use the consecutive chip-firing process on each one (firing on one, borrowing from the other) to show they are equivalent.
    This is a consequence of the construction of the divisors from Part (1).
    Importantly, each time we consider the next smallest base shape with a divisor of order $k_i-1$, we add another vertex in between the nonzero vertex and the shared edge, while also increasing the multiple of $k_i-1$ by 1.
    Thus, taking a multiple of the divisor, we can use the consecutive chip-firing process, much like in the proof of the order of $\eta_{x,y}$ in Proposition \ref{lemma:order_of_divisors}, to get a $1$ on the vertex which now corresponds to the divisor of smaller order.
    The analogous procedure on the opposite base shape in the pair, but borrowing instead of firing, produces a $1$ and $-1$ on the vertices desired, whereas the nonzero element on the shared vertex is cancelled out by this procedure (see Figure \ref{fig:today} for an example).
    Thus, if a $c_i$ is a multiple of another $c_j$, then they are linearly dependent.

    \begin{figure}[ht]
    \centering
    \includegraphics[scale=.30, trim = 2cm 3cm 8.4cm 2.5cm, clip]{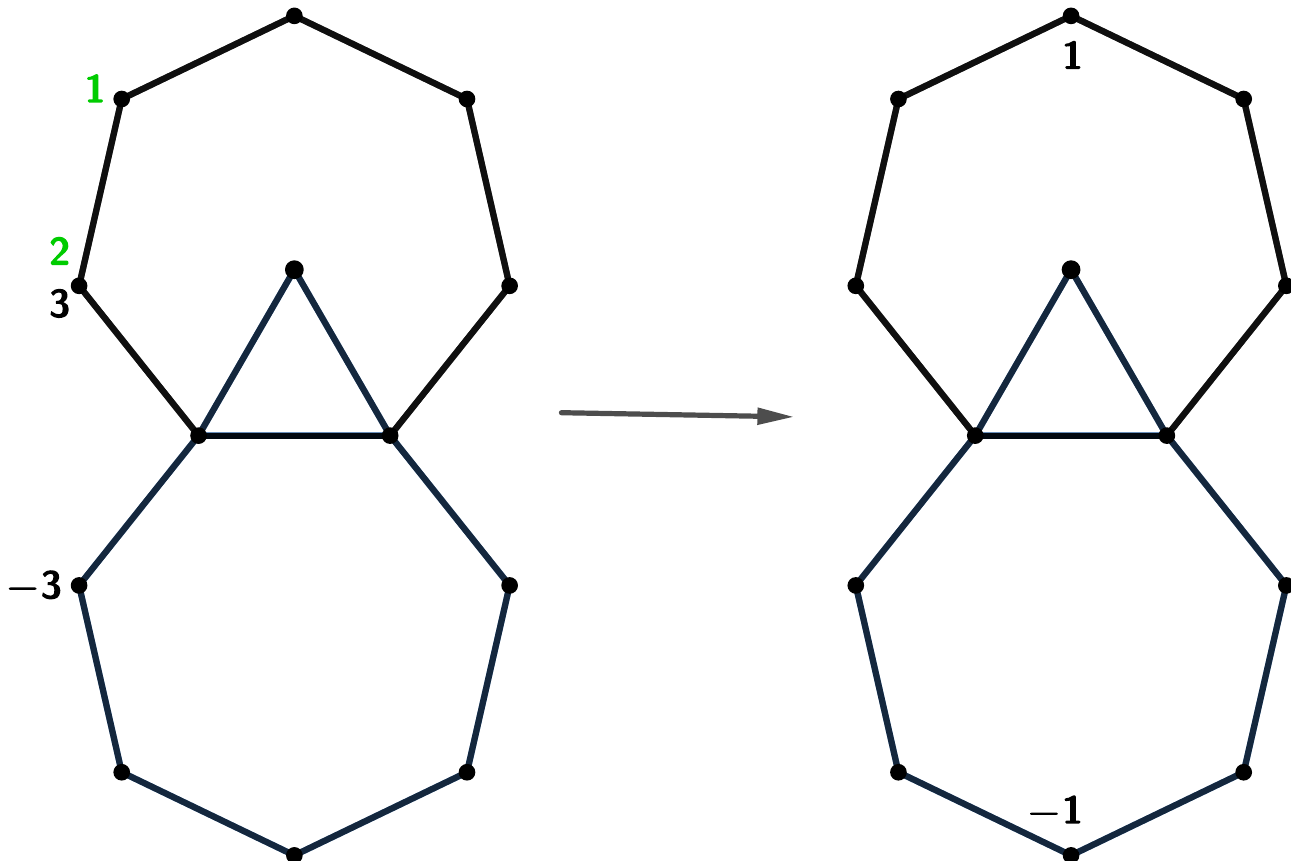}
  \caption{A multiple of an order $6$ divisor constructed from the procedure in Part (1) is linearly equivalent to a divisor of order $2$ on the same pair of base shapes. The black numbers are the integers associated to the vertices, and the green numbers are the number of times fired.}
    \label{fig:today}
\end{figure}
    
    \item If a subcollection of the orders $[c_i]$ of the divisors are not coprime and also not multiples of each other, the pair of base shapes with $t_i$ edges will contain a divisor $d$ whose order is the $\gcd$ of the $c_i$ in this subcollection.
    By subcase (b), since multiples of all $c_i$ should be linearly equivalent to $d$, the divisors with orders in this subcollection are linearly dependent, and we can conclude that the small factor group must have order at least the $\lcm$ of the original orders. 
\end{enumerate}

\noindent Consequently, the order of the small factor must be at minimum the least common multiple of the orders of the divisors $c_j$, so we obtain a copy of at least $\Z/\lcm([c_j])\Z$ as a small factor of the critical group.

\begin{figure}[ht]
    \centering
    \includegraphics[scale=.30, trim = .8cm 2.5cm 29.9cm 1.1cm, clip]{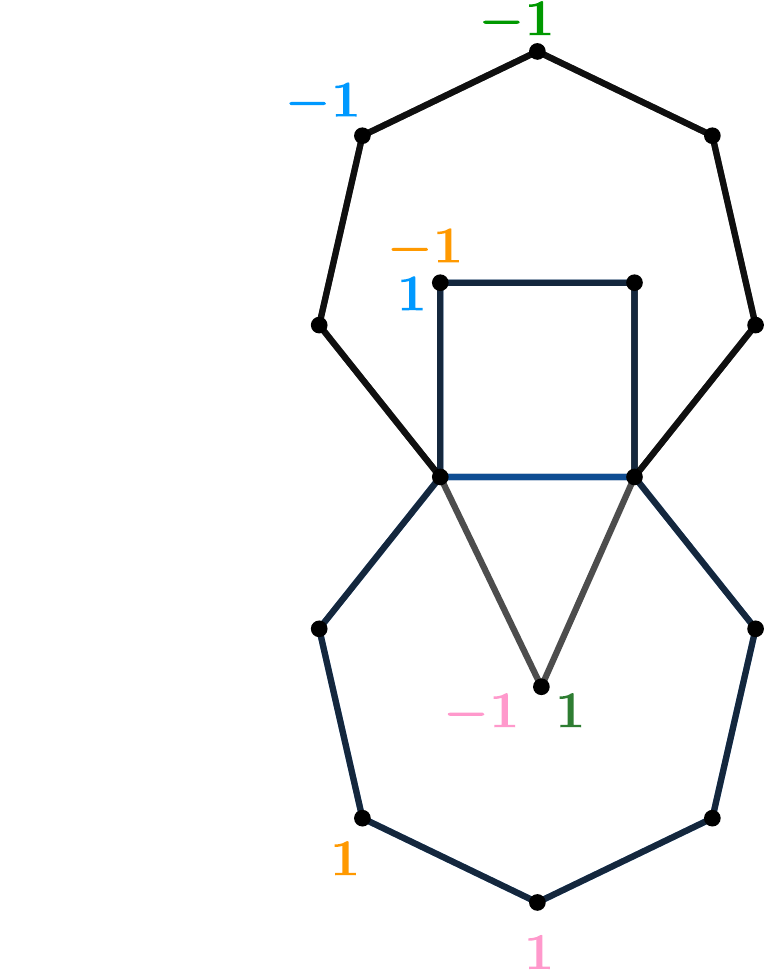}
    \caption{An example of the divisors constructed via the procedure outlined in Part (1) on $\mathcal{H}_{2,3,6,6}$. Adding the green and red divisors creates a divisor of order $2$ on the pair with $7$ edges, and adding the blue and orange divisors creates a divisor of order $3$ on the same pair. In this case, $2$ and $3$ are coprime, so the critical group has a factor of at least $\Z/6\Z$.}
    \label{fig:new}
\end{figure}

\item[(iii)] Lastly, if there exists a unique $k_i-1$ whose greatest common divisors with pairs of other elements is $c_i \neq 1$, then applying the procedure from Part (1) to a base shape with $c+1$ edges gives two linearly independent divisors of order $c$, implying the existence of an additional small factor in the critical group.

Consider the greatest common divisors $c_i$ of $k_i-1$ with pairs of all other elements in $\{k_1-1, \dots, k_n-1\}$.
The issue is that now, the divisors of order $c_i$ corresponding to a particular $k_i-1$ are not linearly dependent.

Nevertheless, we still claim that any $c_i$ which are factors of other ones are not considered in the critical group.
To see this, consider the collection $\{k_i-1, \dots, k_{i-1}-1, k_{i+1}-1, \dots, k_n-1\}$, i.e., remove the $i$th base shape. 
If there exist two $2$-tuples for which one $c_i$ was a factor of the other $c_j$, then a factor of the critical group $\Z/c_i \Z$ already exists.
By the procedure of Part (1), when adding back in the $i$th base shape there must exist a linearly independent divisor of order $c_j$. 
Hence, if we obtained a second copy of $\Z/c_i \Z$ by adding back in the $i$th base shape, this subcollection of size $4$, which includes a $2$-tuple, our original $k_i-1$, and one other element of $\{k_i-1, \dots, k_n-1\}$, will contribute to $3$ linearly independent small factor groups. 

However, in the case where all base shapes are identical and we can construct a maximal set of $\eta_{x,y,i}$'s, by Theorem \ref{thm:critical_group} there are $n-2$ small factors in the critical group, and this must hold for the generalization in Part (1) of this proof.
Indeed, if we consider the generalized $\eta_{x,y,i}$'s on these $4$ base shapes, there would be $4$ of them, where one contributes to the large factor group. 
This collection cannot possibly be a maximally generating set, as taking linear combinations forces two divisors to be on the same pair of base shapes, where one is a multiple of the other, and the consequence of Part (1) used in item (ii) implies they are linearly dependent. 

Thus, only the largest factors are relevant and we obtain a small factor of the critical group of at least $\Z/\lcm([c_i])\Z$.
Of course, we cannot know for certain whether this is the least common multiple, or if is a product of $\Z/c_i\Z$. 
However, this guarantees a lower bound which we will later show is exact.
\end{enumerate}

\noindent As these are all of the possibilities outlined in the theorem statement, this completes the proof.
\end{proof}

With the following well-known lemma whose proof is omitted, we have all the tools to state and prove the structure of the critical group.

\begin{lemma}\label{thm:gcd}
Let $\{a_1, \dots, a_n\}$ be a set of positive integers. 
Then $$\lcm(a_1,\dots,a_n)=(a_1\cdots a_n)/(\gcd((n-1)-\operatorname{tuples})).$$
\end{lemma}

\begin{theorem}\label{thm:critical}
Assume the notation from Theorem \ref{thm:diffcyclebasis}.
Let $\alpha_p$ denote the $k_i-1$ for each collection $s_p$ in the first bullet of Theorem \ref{thm:diffcyclebasis}. 
Let $\beta_q$ be the least common multiple of the $c_i$ for every pair with $t_q$ edges in the second item, and let $\gamma_r$ denote the least common multiple of the $c_i$'s for every unique $k_r-1$ in the third item.

Then the critical group for the hinge graph $\mathcal{H}_{k_1-1, \dots, k_n-1}$ is isomorphic to
\[\bigoplus_p \Z/\alpha_p \Z \bigoplus_q \Z/\beta_q\Z \bigoplus_r \Z/\gamma_r\Z \oplus \Z/(\lcm(|\epsilon_{x,y,i}|))\Z.\]
\end{theorem}

\begin{proof}
Using the notation as in Theorem \ref{thm:general_order} and Proposition \ref{diffdelta},  observe that the quotient \[|\mathcal{H}_{k_1-1,\dots,k_n-1}|/|\delta_{x,y}|=a/b = (k_1-1)\cdots(k_n-1)/(\lcm((k_1-1),\dots, (k_n-1))).\]
By Lemma \ref{thm:gcd}, this is equivalent to $\gcd((n-1)-\operatorname{tuples})$.
Thus, it suffices to show that the contribution from $\lcm(|\epsilon_{x,y,i}|)/|\delta_{x,y}|$ plus the three bullet points outlined in Theorem \ref{thm:diffcyclebasis} are all possible factors in $\gcd((n-1)-\operatorname{tuples})$.

First, if there exist $k_i-1, k_j-1$ for $i\neq j$ whose greatest common divisor is $c \neq 1$, there exists a divisor $\epsilon_{x,y,i}$ with order $c |\delta_{x,y}|$ in the critical group by Theorem \ref{thm:diffepsilon}.
In the greatest common divisor of $n-1-\operatorname{tuples}$, there must be a factor of exactly $c$ and no larger, since one element in $\gcd((n-1)-\operatorname{tuples})$ is missing $k_i-1$, and another is missing $k_j-1$.

Second, for any collection $s$ of identical base shapes with $k_i$ edges on the hinge graph, there will be $|s|-1$ factors of $k_i-1$ in $\gcd((n-1)-\operatorname{tuples})$ since at most one can be removed.
This corresponds to one factor in $|\epsilon_{x,y,i}|$ from above, and $|s|-2$ copies in the small factors of the critical group as established in Theorem \ref{thm:diffcyclebasis}.

Third, if there exists a pair of identical base shapes with $t_i$ edges, there is a factor of $t_i-1$ in $\gcd((n-1)-\operatorname{tuples})$ from the first point in the proof, but this is not necessarily the only factor that increases.
If $t_i-1$ is not coprime to all other $k_j-1 \neq t_i-1$, then there is another factor of $c = \lcm([c_i]) = \lcm(\gcd(t_i-1, k_j-1))$, where each $c_i$ comes from the $k_j-1$ that was removed from $\gcd((n-1)-\operatorname{tuples})$.
When we take the $\gcd((n-1)-\operatorname{tuples})$, all extra factors will be removed, except for the least common multiple.
Hence, the least common multiple is taken rather than the product.

Fourth, if there exists a base shape with $k_i$ edges, where $k_i-1$ is unique but whose greatest common divisor with two other elements is $c_i \neq 1$, we get two factors of the greatest common divisor $c_i$ in  $\gcd((n-1)-\operatorname{tuples})$ since we only remove one of the three elements.
One of these factors is in $\epsilon_{x,y,i}$ by Theorem \ref{thm:diffepsilon}, and the other is a $c_i$ contributing to the small factor group due to Theorem \ref{thm:diffcyclebasis} in a similar manner to the third point of this proof.

Finally, if there exists a base shape with $k_i$ edges such that $k_i-1$ is coprime to all other $k_j-1$ for $i \neq j$, then this factor does not appear in the greatest common divisor of $(n-1)-\operatorname{tuples}$.
This corresponds to the fact that it is included in $|\delta_{x,y}|$ by Proposition \ref{diffdelta}, and hence has already been quotiented out.

So we have run through every possible case outlined in Theorem \ref{thm:diffcyclebasis} as well as Theorem \ref{thm:diffepsilon}.
We know that this constitutes all factors in the greatest common divisor because if any number is added to the collection that we are taking $\gcd((n-1)-\operatorname{tuples})$ of, then it must satisfy one of the following:
\begin{itemize}
    \item it is coprime to all others,
    \item it is unique and a multiple of other numbers,
    \item it exists in a pair which is coprime from all others, 
    \item it exists in a pair which is not coprime to all others, or
    \item it exists in a collection of 3 or more identical numbers. (Note that the question of whether this is coprime to others is already accounted for by looking at pairs.)
\end{itemize}
But these are exactly the cases we have run through.
By combining Remark \ref{rmk:lcmep}, which gives us an upper bound for the size of the critical group, and Theorem \ref{thm:diffcyclebasis}, which gives us the number of small factors of the critical group, we obtain  

\[\bigoplus_p \Z/\alpha_p \Z \bigoplus_q \Z/\beta_q\Z \bigoplus_r \Z/\gamma_r\Z \oplus \Z/(\lcm(|\epsilon_{x,y,i}|))\Z.\]

as the critical group.
This finishes the proof.
\end{proof}

\begin{example}
As an example, consider the collection $\{k_1-1, \dots, k_n-1 \} = \{4,4,4,6,6,24,7\}$.
In this case, \[\gcd((n-1)-\operatorname{tuples}) = 4*4*4*6*6=2304.\]
From the first part of the proof of Theorem \ref{thm:critical}, we obtain a factor of $\lcm(4,6)=12$.
From the second part, we obtain a factor of $4$ due to the subcollection $\{4,4,4\}$.
From the third part, we get two copies of $2= \gcd(4,4,6)=\gcd(4,6,6)$, one from each pair (making sure not to overcount by considering $24$ as it is a unique element).
From the fourth part, we obtain a factor of $\lcm(\gcd(4,4,24), \gcd(6,6,24))=12$.
Lastly, from the fifth part, we obtain no factors, since $7$ is coprime to all other elements and does not appear in the $\gcd$.
And we see that $12*4*4*12=2304$, as expected.

By Proposition \ref{diffdelta}, the order of $\delta_{x,y}$ is $168+\frac{168}{4}+\frac{168}{4}+\frac{168}{4}+\frac{168}{6}+\frac{168}{6}+\frac{168}{24}+\frac{168}{7} = 381$, so $\lcm(\epsilon_{x,y,i})=381*12=4572.$

Thus, the critical group \[\mathcal{H}_{k_1-1,\dots k_n-1} \cong \Z/2\Z \oplus \Z/2\Z \oplus \Z/4\Z \oplus \Z/12\Z \oplus \Z/4572\Z.\]
\end{example}

An alternative route to obtain the critical group of hinge graphs with different base cycles is to invoke Theorem 2 in \cite{CoriRossin}, which states that the critical group of a planar graph is isomorphic to the critical group of its dual.
Hinge graphs are the duals of thick cycle graphs, thus the critical group of a hinge graph is isomorphic to the critical group of a thick cycle graph, whose complete structure is given in Theorem 2.29 in \cite{Alfaro} and Theorem 1 in \cite{MSRI-UP}.


\section{Further Directions}\label{sec:conclusion}
We conclude this paper by providing some questions and problems that are worth investigating and might be of interest to others. 
    \vspace{0.25cm}

\begin{enumerate}
    \item Suppose the hinge graph is treated as an operation on graphs more generally. How is the critical group of an arbitrary graph affected by the hinge operation? 
    \vspace{0.25cm}

    \item How is the critical group of a graph affected if we attach a base shape to other specified edges?
    \vspace{0.25cm}

    \item Describe the critical group structure of the cone over a hinge graph.
    \vspace{0.25cm}

    \item Can the number of arithmetical structures, as defined in \cite{GlassKaplan}, on hinge graphs be enumerated? 
    \vspace{0.25cm}

    \item In \cite{KeyesReiter}, Keyes and Reiter derive an upper bound for the number of arithmetical structures of connected undirected graphs on n vertices with no loops, which only depends on the number of vertices and edges. 
    When considering arithmetical structures of hinge graphs, can we refine and compare their upper bounds?
    \vspace{0.25cm}

    \item What can be said about the critical groups of directed hinge graphs? In particular, how does the critical group behave when all base shapes have the same orientation?

\end{enumerate}
    \vspace{0.25cm}


\section*{Acknowledgements}

The authors thank Jackson Morrow for introducing them to one another.
The authors also thank Carlos Alfaro, Nathan Kaplan, and Ralihe Villagr\'an for fruitful correspondence. 
A special thank you to Dave Jensen for his time and support in reading an earlier draft of this paper and his insightful feedback and to the anonymous referee whose comments helped refine our exposition.


\bibliographystyle{amsplain}
\bibliography{bibliography}

\end{document}